\documentclass[a4paper]{amsart}
\usepackage{amsmath,amssymb,amsthm}
\newtheorem{thm}{Theorem}

\newtheorem{cor}[thm]{Corollary}
\newtheorem{prop}[thm]{Proposition}

\newtheorem{lem}[thm]{Lemma}
\newtheorem{remark}{Notation}

\newcommand{\sX}{\mathfrak X}

\newcommand{\sA}{\mathfrak A}

\newcommand{\sC}{\mathfrak C}
\newcommand{\sD}{\mathfrak D}

\newcommand{\sF}{\mathbb F}

\newcommand{\qbinom}[2]{\binom{#1}{#2}_q}
\newcommand{\qqbinom}[2]{\binom{#1}{#2}_{q^2}}

\title[Modular dual polar schemes]{Modular adjacency algebras of Dual polar schemes}
\author{Osamu Shimabukuro}
\address[O. Shimabukuro]{Department of Mathematics, Faculty of Education,
 Nagasaki University, 1-14 Bunkyo-machi, Nagasaki 852-8521, Japan}
\email{shimabukuro@nagasaki-u.ac.jp}
\date{June 10, 2015}
\keywords{association schemes; ($P$ and $Q$)-polynomial schemes; modular adjacency algebras; dual polar schemes.}
\subjclass[2000]{05E30}
\begin{document}

\maketitle

\begin{abstract}
We can define the adjacency algebra of an association scheme over arbitrary field.
It is not always semisimple over a field of positive characteristic.
The structures of adjacency algebras over a field of positive characteristic have not been sufficiently studied.

In this paper, we consider the structures of adjacency algebras of dual polar schemes over a field of positive characteristic and determine them when their algebras are local algebras.
\end{abstract}
\section{Introduction}
An adjacency algebra of an association scheme over a field of characteristic zero is called the Bose-Mesner algebra.
The Bose-Mesner algebras is always semisimple.
Many researchers have studied this case and there are many results \cite{bi,bcn}.
An adjacency algebra of an association scheme over a field of positive characteristic is called a modular adjacency algebra.
Hanaki and Yoshikawa determined the structure of the modular adjacency algebras and the modular standard modules of association schemes of class $2$ \cite{HY05}.
Using modular standard modules, they provided more detailed classification than using parameters of strongly regular graphs.
This indicates that structures of the modular standard modules of association schemes provide more detailed characterization than parameters of association schemes.
In order to determine the structure of the modular standard modules, we first need to obtain the structure of the modular adjacency algebras.
However, the structures of adjacency algebras over fields of positive characteristic have not been sufficiently studied \cite{H02,HY05,S07,S11,SY15,Y04}. 

In this paper, we consider the structure of modular adjacency algebras of all types of dual polar schemes
and determine the structure of the modular adjacency algebras of these schemes when their algebras are local algebras.
One of our main theorems is the following.  
\begin{thm}\label{MainTheorem1}
Let $F$ be a field of odd characteristic $p$, $r$ be an odd prime power which is not divided by $p$ and
$\sA_d$ be a dual polar scheme on $[{}^2 A_{2d}(r)]$.
If $F \sA_d$ is a local algebra,
then
$$
F \sA_d \cong P/W_d,
$$
where $P$ is a polynomial ring $F[X_1,X_2\dots]/(X_1^p,X_2^p,\dots)$  and $W_d$ is an ideal generated by monomials of $P$ such that their weights are grater then $d$.  
\end{thm}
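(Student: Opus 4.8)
The plan is to realize $F\sA_d$ as a filtered algebra and match it with $P/W_d$ generator by generator. Throughout I use that $F\sA_d$ is a commutative algebra of dimension $d+1$ with basis the adjacency matrices $A_0,\dots,A_d$ and structure constants the intersection numbers $p_{ij}^k$ reduced modulo $p$; since the scheme is $P$-polynomial these are controlled by the three-term recurrence $A_1A_i=c_iA_{i-1}+a_iA_i+b_iA_{i+1}$, and the eigenvalues $\theta_i$ of $A_1$ are the usual dual polar values, expressed through the Gaussian binomials $\qqbinom{\cdot}{\cdot}$ in base $q^2$ attached to $[{}^2A_{2d}(r)]$. I would first record that, because these eigenvalues are rational integers, the hypothesis that $F\sA_d$ is local forces all of them (and, more generally, all eigenvalues of every $A_{p^{j-1}}$) to collapse to a single residue in the prime field $\sF_p\subseteq F$; this is exactly the congruence on $r$ modulo $p$ that ``local'' encodes, and it is the regime in which a $q$-Lucas (Kummer-type) reduction of the intersection numbers in base $q^2$ becomes available.

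The second step is to assign to $A_n$ the weight $n$ and to note that writing $n=\sum_j a_jp^{j-1}$ in base $p$ yields a bijection $A_n\leftrightarrow\prod_j X_j^{a_j}$ between the basis of $F\sA_d$ and the monomials of weight $\le d$ in $P/W_d$; in particular $\dim_F P/W_d=d+1=\dim_F F\sA_d$, and the relations $A_n=0$ for $n>d$ correspond exactly to the generators of $W_d$. I then define a candidate homomorphism $\psi\colon P/W_d\to F\sA_d$ on generators by sending $X_j$ to the nilpotent part $A_{p^{j-1}}-\theta^{(j)}I$ of the $p^{j-1}$-th adjacency matrix, where $\theta^{(j)}\in\sF_p$ is the common residue of its eigenvalues. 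The reason the indices $p^{j-1}$ are the right ones is that they are precisely the basis elements which fail to be polynomials in $A_1$ modulo $p$: the recurrence coefficients $b_i$ vanish modulo $p$ at exactly the steps that prevent $A_1$ from reaching the next block, so $A_{p^{j-1}}$ are forced to appear as independent generators.

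The third step is to verify the defining relations of $P/W_d$. For the truncations $X_j^p=0$ it suffices, by additivity of the Frobenius endomorphism on the commutative $F$-algebra $F\sA_d$, to check that $(A_{p^{j-1}}-\theta^{(j)}I)^p=A_{p^{j-1}}^p-(\theta^{(j)})^pI=0$, that is, that the nilpotency index of each $A_{p^{j-1}}-\theta^{(j)}I$ is at most $p$; since the Frobenius kernel is an ideal, this already forces the Frobenius to annihilate the whole radical of $F\sA_d$. For the multiplicative relations one compares leading terms under the weight filtration: the top intersection number $p_{mn}^{m+n}$ equals a power of $q$ times $\qqbinom{m+n}{m}$, and the $q$-Lucas theorem in base $q^2$ reduces it modulo $p$ to $1$ when the base-$p$ sum $m+n$ carries nowhere and to $0$ when it carries, which is exactly the rule $\prod_jX_j^{a_j}\cdot\prod_jX_j^{b_j}=\prod_jX_j^{a_j+b_j}$ or $0$ in $P/W_d$. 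Surjectivity of $\psi$ follows because the images of the $X_j$ recover the $A_{p^{j-1}}$ and hence, by triangularity with respect to the weight, every $A_n$; the dimension count then upgrades $\psi$ to an isomorphism.

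The main obstacle is the arithmetic of the third step, namely controlling the full reduction of the intersection numbers modulo $p$ rather than merely their leading coefficients. The genuinely hard point is the Frobenius-annihilation $(A_{p^{j-1}}-\theta^{(j)}I)^p=0$: a priori a local commutative algebra may carry elements of nilpotency index larger than $p$ (already $F[X]/(X^{p+1})$ does), so this identity is a special feature of the dual polar parameters of $[{}^2A_{2d}(r)]$ in the local regime and must be extracted from a $q$-Lucas analysis of the eigenvalues $\theta_i^{(p^{j-1})}$ together with the vanishing pattern of the $b_i$ and $c_i$ modulo $p$. Once this is established, the remaining bookkeeping — that the lower-weight corrections are mutually consistent and that $\psi$ is a well-defined algebra homomorphism — is a routine filtered-algebra argument, and comparing dimensions completes the proof.
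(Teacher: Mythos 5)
Your proposal correctly identifies the target structure, the weight/base-$p$ bookkeeping, and the Lucas-type carry rule as the governing mechanism, but it is a plan rather than a proof: the two relations that make $\psi$ well defined are exactly the points you defer. To map $P/W_d$ into $F\sA_d$ you must verify, \emph{exactly} and not just on leading terms, that $(A_{p^{j-1}}-\theta^{(j)}I)^p=0$ and that every product of your generators of total weight exceeding $d$ vanishes in $F\sA_d$. Comparing leading coefficients under the weight filtration cannot establish either relation, since both assert the vanishing of \emph{all} lower-order terms, which are governed by the full intersection numbers $p_{i,j}^k$ of the dual polar scheme --- sums without a usable closed form in the adjacency basis. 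You acknowledge the Frobenius-annihilation as ``the genuinely hard point'' to be ``extracted from a $q$-Lucas analysis,'' but that extraction is the actual content of the theorem, and there is a concrete reason to doubt it goes through with your generators: subtracting only the scalar $\theta^{(j)}I$ is almost certainly not the right lower-order correction.

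The paper resolves precisely this by changing to the Riemann basis $C_i=\sum_j\qbinom{j}{i}A_{d-j}$, so that its effective generator is not $A_{p^{j-1}}-\theta^{(j)}I$ but $C_{d-p^{j-1}}=A_{p^{j-1}}+\sum_{n<p^{j-1}}\qbinom{d-n}{d-p^{j-1}}A_n$, whose specific Gaussian-coefficient corrections make the structure constants collapse in the local case to a single term, $\rho_{s,t}^u\equiv\qbinom{d-u}{s-u}$ when $d-s=t-u$ and $0$ otherwise (Proposition \ref{MatrixOfLocal}, resting on the closed product formula of Lemma \ref{StructureConstants}). From this collapse the exact relations follow by Lucas' theorem (Lemma \ref{PropertiesOfA}), the case $d=p^l-1$ is settled by the tensor decomposition $F\sA_{p^l-1}\cong\bigotimes^l F\sA_{p-1}\cong P_l$ (Propositions \ref{TensorDecomposition} and \ref{StructureSmallOne}), and general $d$ is reached by the shift epimorphisms $\varphi_{d+1}:F\sA_{d+1}\to F\sA_d$ (Corollary \ref{EpiOfA}), whose composite kernel is identified with $W_d$. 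To complete your argument you would either have to redo this closed-form computation in the adjacency basis --- i.e., prove that the lower-order terms of $p_{m,n}^k$ conspire to vanish, which is not routine --- or replace your generators by the corrected ones, at which point you have reconstructed the paper's Riemann-basis proof.
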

These structures of modular adjacency algebras of  dual polar schemes on $[{}^2 A_{2d}(r)]$ decide other structures.
\begin{thm}\label{MainTheorem2}
Let $F$ be a field of odd characteristic $p$, $q$ be an odd prime power which is not divided by $p$ and
$\sC_{2d'+1}$  be a dual polar scheme on $[C_{2d'+1}(q)]$.
If $F \sC_{2d'+1}$ is a local algebra,
then
$$
F \sC_{2d'+1} \cong  P/W_{d'} \otimes P/W_{1}.
$$
\end{thm}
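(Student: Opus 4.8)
The plan is to deduce Theorem \ref{MainTheorem2} from Theorem \ref{MainTheorem1} by ``halving'' the scheme $[C_{2d'+1}(q)]$ and splitting off a two-dimensional factor. Write $A_0,A_1,\dots,A_{2d'+1}$ for the adjacency matrices, and recall that for a dual polar scheme $c_i=\frac{q^i-1}{q-1}$ while the eigenvalues of $A_1$ are $\theta_j=\theta_0-(q+1)\frac{q^j-1}{q-1}$. First I would exploit the local hypothesis to pin down $q$ modulo $p$. Since the principal character sends $A_1\mapsto\theta_0=k_1$ and each $\theta_j$ is an integer that reduces to a character of $F\sC_{2d'+1}$, locality forces $\theta_j\equiv\theta_0\pmod p$ for all $j$; already $j=1$ gives $q+1\equiv0$, so $q\equiv-1\pmod p$. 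Consequently $c_i\equiv0\pmod p$ exactly when $i$ is even, and $c_i\equiv1\pmod p$ when $i$ is odd.

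Next I would extract the factor $P/W_1$. The three-term recurrence at $i=1$ reads $A_1^2=b_0A_0+a_1A_1+c_2A_2$, and because $c_2\equiv0$ this collapses to a quadratic relation in $A_1$ alone; under locality its unique root is $\bar{k_1}$, so $N_1:=A_1-\bar{k_1}$ satisfies $N_1^2=0$, giving $F[N_1]\cong F[Z]/(Z^2)\cong P/W_1$.

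The heart of the argument is the identification of the even part. Since $c_{2k+1}$ is a unit mod $p$ while $c_{2k}$ vanishes, the recurrence expresses every odd $A_{2k+1}$ as $N_1$ times an even matrix up to even corrections, whereas the even matrices $A_0,A_2,\dots,A_{2d'}$ close into a subalgebra $\mathfrak E$ of dimension $d'+1$. Using the halving identity $\frac{q^{2k}-1}{q-1}=(q+1)\frac{(q^2)^k-1}{q^2-1}$, one checks that the surviving intersection numbers of $\mathfrak E$ are precisely those of the unitary dual polar scheme on $[{}^2A_{2d'}(q)]$ with $q$-parameter $q^2$; as $q\equiv-1$ forces $q^2\equiv1\pmod p$, that scheme is local, so Theorem \ref{MainTheorem1} (applied with $r=q$) yields $\mathfrak E\cong F\sA_{d'}\cong P/W_{d'}$.

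Finally I would assemble the tensor product. Since $F\sC_{2d'+1}$ is commutative, $N_1$ commutes with $\mathfrak E$, so multiplication defines an algebra homomorphism $\mathfrak E\otimes_F F[Z]/(Z^2)\to F\sC_{2d'+1}$; it is onto because $\mathfrak E$ covers the even classes and $N_1\mathfrak E$ the odd ones, and both sides have dimension $2(d'+1)=2d'+2$, so it is an isomorphism, giving $F\sC_{2d'+1}\cong P/W_{d'}\otimes P/W_1$. The main obstacle is the halving step: one must show that $\mathfrak E$ is genuinely closed modulo $p$---equivalently that the structure constants $p_{2i,2j}^{\,2k+1}$ vanish mod $p$---and that its parameters match $[{}^2A_{2d'}(q)]$ exactly, so that Theorem \ref{MainTheorem1} applies verbatim; everything else is bookkeeping with the three-term recurrence.
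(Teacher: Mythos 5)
Your strategy --- split off a square-zero element $N_1=A_1-\bar{k_1}$, show the even adjacency matrices close into a subalgebra $\mathfrak{E}$ modulo $p$, identify $\mathfrak{E}$ with $F\sA_{d'}$, and finish by a dimension count on the multiplication map --- is genuinely different from the paper's and not implausible, but as written it has a real gap, which you yourself flag: the ``halving step'' \emph{is} the theorem, and nothing in your toolkit delivers it. The claims that $p_{2i,2j}^{2k+1}\equiv 0\pmod p$ and that the surviving constants $p_{2i,2j}^{2k}$ reduce to the intersection numbers of $[{}^2A_{2d'}(q)]$ are not ``bookkeeping with the three-term recurrence'': the intersection numbers of dual polar schemes are sums of products of Gaussian coefficients, and controlling them modulo $p$ is exactly the content of the paper's Lemma \ref{StructureConstants} and Proposition \ref{MatrixOfLocal} --- which work in the Riemann basis $C_i=\sum_j\qbinom{j}{i}A_{d-j}$, not the adjacency basis, precisely because in that basis the local hypothesis collapses the structure constants to the single family $\rho_{s,t}^u\equiv\qbinom{d-u}{s-u}$ supported on $d-s=t-u$. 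The paper's proof then verifies
$$
\rho_{2s+\alpha,2t+\beta}^{2u+\gamma,2d'+1}\equiv\hat\rho_{s,t}^{u,d'}\,\hat\rho_{\alpha,\beta}^{\gamma,1}\pmod p
$$
by a $q$-Lucas congruence (with $q\equiv-1$, so $\qbinom{2a+a'}{2b+b'}\equiv\binom{a}{b}\qbinom{a'}{b'}$ for $a',b'\in\{0,1\}$), exhibiting $F\sC_{2d'+1}\cong F\sA_{d'}\otimes F\sA_1$ directly and then invoking Theorem \ref{MainTheorem1}. Your even/odd splitting, once justified, would essentially amount to transporting that computation back to the $A_i$ basis; until the two congruences above are proved (or derived from Proposition \ref{MatrixOfLocal}, which you never invoke), the argument is a plan, not a proof.

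A secondary error: your derivation of $q\equiv-1\pmod p$ rests on a wrong eigenvalue formula. For dual polar graphs $\theta_j=q^{e}\frac{q^{d-j}-1}{q-1}-\frac{q^{j}-1}{q-1}$, so $\theta_0-\theta_j=\frac{q^j-1}{q-1}\bigl(q^{e+d-j}+1\bigr)$, and with $e=1$ the case $j=1$ only yields $q^{2d'+1}\equiv-1\pmod p$ (e.g.\ $q\equiv 3$, $p=7$, $d=3$ satisfies this without $q\equiv-1$); the shape $\theta_j=\theta_0-(q+1)\frac{q^j-1}{q-1}$ you quote is the Hamming/Grassmann-type one. The conclusion is correct and available at once from Corollary \ref{ConditionOfLocal}, so this is repairable by citation. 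The assembly steps are sound granted the halving: $N_1^2\equiv 0$ does follow from $c_2=q+1\equiv 0$ together with locality, surjectivity follows since the odd $c_{2k+1}\equiv 1$ are units, and the dimension count $2(d'+1)=d+1$ closes the argument.
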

For dual polar schemes on the rest of types, we determined the structures of their modular adjacency algebras using isomorphisms or  epimorphisms.

The modular adjacency algebra of Hamming scheme $H(n,p)$ over a field of characteristic $p$ is also local algebra \cite{Y04},
because the modular adjacency algebra of an association scheme with a prime power order is a local algebra over a field of characteristic the prime \cite{H02}.
However the orders of our dual polar graphs are not prime powers.
Therefore  our situation is different from  the modular adjacency algebra of Hamming scheme.
%%%%%%%%%%%%%%%%%%%
\section{Preliminaries}
Let $X$ be a finite set with cardinality $n$.
We define 
$R_{0} ,\dots$, $R_d$ as symmetric binary relation on $X$.
The $i$-th adjacency matrix $A_i$ is defined to be the matrix indexed by $X$ whose entries are
\[
( A_{i} )_{x y} = \begin{cases} 1 & \text{ if } (x,y) \in R_{i}, \\ 0 & \text{ otherwise}. \end{cases} 
\]

A configuration $\sX = (X, \{R_i\}_{i=0}^d)$ is called a symmetric association scheme of class $d$ if 
$\{A_i\}_{i=0}^d$ satisfies the following:
\begin{enumerate}
\item~$A_0=I$ (identity matrix),
\item ~$\sum_{i=0}^d A_i = J$ (all-ones matrix),
\item~there exist real numbers $p_{i,j}^k$ such that 
\[A_i A_j = \sum_{k=0}^d p_{i,j}^k A_k\]
for all $i,j  \in \{0,1,...,d\}$ \label{def3}.
\end{enumerate}
The set of linear combination of all adjacency matrices is closed under multiplication,
because of the equation (\ref{def3}).
For a commutative ring $R$ with identity, we define $R \sX = \bigoplus_{i = 0}^{~d}~R A_{i}$ as a matrix ring over $R$;
this is called an adjacency algebra of $\sX$ over $R$.
A modular adjacency algebra $F \sX$ is an adjacency algebra of $\sX$ over a field $F$ of positive characteristic $p$.
%%%%%%%%%%%%%%%%%%%%%%
\section{Dual polar schemes}
Let $q$ and $r$ be prime powers, and
$V$ be one of the following spaces equipped  with a specified form:
\begin{align*}
&[C_d(q)]=\sF^{2d}_{q} \text{ with a non-degenerate symplectic form;}\\
&[B_d(q)]=\sF^{2d+1}_{q} \text{ with a non-degenerate quadratic form;}\\
&[D_d(q)]=\sF^{2d}_{q} \text{ with a non-degenerate quadratic form of (maximal) Witt index $n$;}\\
&[^2 D_{d+1}(q)]=\sF^{2d+2}_{q} \text{ with a non-degenerate quadratic form of (non-maximal) Witt index $n$;} \\
&[^2 A_{2d}(r)]=\sF^{2d+1}_{q} \text{ with a non-degenerate Hermitian form $(q=r^2)$;} \\
&[^2 A_{2d-1}(r)]=\sF^{2d}_{q} \text{ with a non-degenerate Hermitian form $(q=r^2)$.} 
\end{align*} 

Let $X$ be the set of maximal totally isotropic subspaces of $V$. Each element of $X$ has dimension $d$. Define the $i$-th relation $R_{i}$ on $X$ by 
$$
(x,y) \in R_{i} \Longleftrightarrow \text{dim}(x \cap y)=d-i.
$$

Then $\sX=(X,\{R_{i}\}_{i=0}^{d})$ is a ($P$ and $Q$) polynomial scheme \cite{bi,D1};
this is called the dual polar scheme on $[C_d(q)]$, $[B_d(q)]$, $[D_d(q)]$, $[{}^2 D_{d+1}(q)]$, $[{}^2 A_{2d}(r)]$, $[^2 A_{2d-1}(r)]$, respectively.
(For details about the association schemes of dual polar spaces, see \cite{bcn,St80}.)
The dual polar scheme is accompanied by a regular semilattice \cite{CST08,D2}.
We can compute the structure constants using the parameters of these partially ordered sets.
Let $\{C_i\}_{i=0}^d$ be another basis ({\it Riemann basis} \cite{D2}) of $R\sX$ with 
$$
C_i=\sum_{j=0}^d \qbinom{j}{i} A_{d-j} \text{ for $i \in \{0,\dots,d\},$}
$$
where $\qbinom{a}{b}$ is a Gaussian coefficient.
We consider various homomorphisms between modular adjacency algebras of dual polar schemes 
using structure constants of $\{C_i\}_{i=0}^d$.
The structure constants $\{\rho_{s,t}^u\}_{s,t,u=0}^d$ satisfy
$$
C_s C_t = \sum_{u=0}^{\min(s,t)} \rho_{s,t}^u C_u.
$$
We define the $(d +1)\times (d+1)$ lower triangular matrix $G_s$ as
\begin{equation*}
(G_s)_{t,u} = \rho_{s,t}^u
\end{equation*}
and $RG=\oplus_{i=0}^{d} RG_i$ as a matrix ring over $R$.
This is a commutative algebra over $R$ and there is an isormorphism 
$\varphi : R\sX \longrightarrow RG$ given by $C_s \mapsto G_s$.

We can calculate the structure constants as follows.
\begin{lem}\label{StructureConstants}
Let $\rho_{s,t}^u$, $s,t,u \in \{0,1,\dots,d\}$, be the structure constant of $R \sX$.
Then
$$
\rho_{s,t}^u=\qbinom{d-u}{s-u}\qbinom{d-s}{t-u} q^{u(d-s-t+u)} (-q^{e-u};q)_{d-s-t+u},
$$
where $(a;q)_k$ is a $q$-Pochhammer symbol,
e is $1$, $1$, $0$, $2$, $3/2$, $1/2$ in the respective cases
$[C_n(q)]$, $[B_n(q)]$, $[D_n(q)]$, $[{}^2 D_{n+1}(q)]$, $[{}^2 A_{2n}(r)]$, $[{}^2 A_{2n-1}(r)]$ and $q=r^2$ for $[{}^2 A_{2n}(r)]$, $[{}^2A_{2n-1} (r)].$ 
\end{lem}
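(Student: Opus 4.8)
The plan is to prove the formula by a double-counting argument based on the combinatorial meaning of the Riemann basis, followed by a $q$-binomial simplification. First I would observe that, since $(A_{d-j})_{xy}=1$ exactly when $\dim(x\cap y)=j$, the definition of $C_i$ gives the clean interpretation
$$(C_i)_{xy}=\qbinom{\dim(x\cap y)}{i},$$
so that $(C_i)_{xy}$ counts the $i$-dimensional subspaces contained in $x\cap y$ (all of which are totally isotropic). Reading off the $(x,y)$-entry of $C_sC_t=\sum_z (C_s)_{xz}(C_t)_{zy}$, the product therefore counts triples $(U,W,z)$ with $U\subseteq x\cap z$, $W\subseteq z\cap y$, $\dim U=s$, $\dim W=t$, and $z\in X$ maximal isotropic. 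Summing over $z$ last, this equals
$$(C_sC_t)_{xy}=\sum_{\substack{U\subseteq x,\ \dim U=s\\ W\subseteq y,\ \dim W=t\\ U+W\text{ totally isotropic}}}\nu\bigl(\dim(U+W)\bigr),$$
where $\nu(m)$ is the number of maximal isotropic subspaces containing a fixed totally isotropic $m$-space. Since the residue of an $m$-dimensional isotropic subspace is again a dual polar space of rank $d-m$ with the same parameter $e$, I would record the standard count $\nu(m)=(-q^e;q)_{d-m}$, which is exactly the regular-semilattice parameter alluded to above.

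Next I would handle the isotropy constraint geometrically. Writing $I=x\cap y$ (so $\dim I=w$ and $(x,y)\in R_{d-w}$), both $x$ and $y$ lie in $I^\perp$, and the form descends to a nondegenerate form on the residue $I^\perp/I$, in which $\bar x=x/I$ and $\bar y=y/I$ are opposite maximal isotropics perfectly paired by the induced form. Hence for $U\subseteq x$ and $W\subseteq y$ one has $U+W$ totally isotropic if and only if the images $\bar U,\bar W$ are orthogonal in the residue, and moreover $U\cap W=(U\cap I)\cap(W\cap I)\subseteq I$. This decouples each pair $(U,W)$ into its part inside $I$ and its part in the residue, and lets me stratify the sum by $K:=U\cap W\subseteq I$ together with the dimensions of $U\cap I$ and $W\cap I$.

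I would then carry out the enumeration. Fixing $K\subseteq I$ of dimension $a$ contributes a factor $\qbinom{w}{a}$; choosing $U\supseteq K$ inside $x$ gives $\qbinom{d-a}{s-a}$ possibilities, and the number of admissible $W$ (with prescribed intersection with $U$ and with $\bar W\perp\bar U$) produces the remaining Gaussian coefficient together with a power of $q$ coming from the perfect pairing in the residue. Collecting the weight $(-q^e;q)_{d-s-t+a}$ and expanding the resulting $w$-dependence in the basis $\{\qbinom{w}{u}\}$ then yields $C_sC_t=\sum_u \rho_{s,t}^u C_u$, and reading off the coefficient of $\qbinom{w}{u}$ gives the stated closed form.

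The hard part will be this last step: the raw count is naturally indexed by $\dim(U\cap W)$, whereas the expansion index $u$ in $\sum_u\rho_{s,t}^u\qbinom{w}{u}$ mixes several strata, so converting the sum into the single closed expression — in particular producing the exponent $q^{u(d-s-t+u)}$ and shifting the $q$-Pochhammer from base $-q^e$ to the stated base $-q^{e-u}$ — requires a careful application of $q$-Vandermonde-type identities to collapse the intermediate summations. Verifying that the resulting coefficients are independent of $w$, as they must be for genuine structure constants, provides a useful consistency check throughout.
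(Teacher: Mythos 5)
Your setup is sound, and it is a genuinely different route from the paper's: the paper does not re-derive the triple count at all, but simply imports the regular-semilattice parameters from \cite{CST08}, writing $\rho_{s,t}^u=\mu(s,u)\sum_{j}\nu'(j,u)\pi(j,s,t)$ with $\mu(u,s)=\qbinom{d-u}{s-u}$, $\nu'(j,u)=(-1)^{u-j}q^{\binom{u-j}{2}}\qbinom{u}{j}$ and an explicit $\pi(j,s,t)$; its entire proof then consists of $q$-binomial algebra --- $q$-Vandermonde to expand $\qbinom{i-s+j}{t}$, the orthogonality relation $\sum_{j}(-1)^{u-j}q^{\binom{u-j}{2}}\qbinom{u}{j}\qbinom{j}{h}=\delta_{u,h}$ to collapse the $j$-sum, the subset-of-a-subset identity, and finally the $q$-binomial theorem $\sum_{h}q^{\binom{h}{2}}\qbinom{m}{h}z^h=(-z;q)_m$ to produce the Pochhammer factor. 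Your first half --- the interpretation $(C_i)_{xy}=\qbinom{\dim(x\cap y)}{i}$, the triple count over $(U,W,z)$, the residue count $\nu(m)=(-q^e;q)_{d-m}$, and the reduction of isotropy of $U+W$ to orthogonality of $\bar U,\bar W$ in $I^\perp/I$ --- is correct, and if completed would make the lemma self-contained where the paper relies on a citation.

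The gap, however, is genuine and sits exactly where you flag ``the hard part'': everything quantitative in the lemma is left unproven. Concretely, (1) the count of admissible $W$ given $K$ and $U$ is asserted to ``produce the remaining Gaussian coefficient together with a power of $q$,'' but this count depends on the auxiliary parameters $\dim(U\cap I)$ and $\dim(W\cap I)$, which you introduce and never sum out, so there are intermediate summations whose collapse is unverified; (2) your raw expansion is $\sum_{a}N_a(w)\,(-q^e;q)_{d-s-t+a}$, indexed by the exact intersection dimension $a$ and with Pochhammer base $-q^e$, whereas the target has base $-q^{e-u}$, the exponent $q^{u(d-s-t+u)}$, and the mixed index $u$ from expanding in the triangular basis $\{\qbinom{w}{u}\}_{u=0}^{d}$. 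Passing from one to the other is not bookkeeping: the inversion of the matrix $\bigl(\qbinom{j}{u}\bigr)$ (the paper's $\nu'$), a $q$-Vandermonde expansion, and the $q$-binomial theorem are each needed, and together they constitute essentially the whole of the paper's written proof. Since the closed form is the entire content of the lemma, deferring these steps to ``careful application of $q$-Vandermonde-type identities'' leaves the statement unestablished; to complete your route you would need to compute $N_a(w)$ explicitly (including the $q$-power contributed by the perfect pairing), perform the inversion to the $\qbinom{w}{u}$ basis, and exhibit the resummation giving the base shift $-q^e\mapsto -q^{e-u}$ --- at which point your argument would subsume, rather than avoid, the paper's computation.
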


To prove Lemma \ref{StructureConstants}, we use the following properties of the Gaussian coefficients; see Proposition 8.5.2, Exercise 8.5.7 in \cite{CST08} and \cite{C08}.
\begin{prop}\label{qBinomialProperties}
\begin{enumerate}
\item \label{qBinomialProperty1}
\[
\binom{m}{n}_q \binom{n}{k}_q = \binom{m}{k}_q \binom{m-k}{n-k}_q;
\]
\item\label{qBinomialProperty2}
\[
\qbinom{m+n}{k}=\sum_{i+j=k}^{} \qbinom{m}{i}\qbinom{n}{j} q^{i(n-j)};
\]
\item\label{qBinomialProperty3}
\[
\sum_{i=0}^{m} (-1)^{m-i} q^{\binom{m-i}{2}} \qbinom{m}{i}\qbinom{i}{n}=\delta_{m,n};
\]
\item\label{qBinomialProperty4}
\[
\sum_{i=0}^{m} q^{\binom{i}{2}} \qbinom{m}{i} z^i = (-z;q)_{m} = (1+z)(1+zq)\dots(1+zq^{m-1})
=\prod_{l=0}^{m-1} (1+z q^l).
\]
\end{enumerate}
\end{prop}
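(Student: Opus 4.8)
The plan is to treat the four identities as a linked package, deriving the later ones from the earlier ones together with the $q$-Pascal recurrences, rather than proving each in isolation. I would first record the standard building blocks: the factorial expression $\qbinom{m}{n}=[m]_q!/([n]_q![m-n]_q!)$ with $[k]_q!=\prod_{j=1}^{k}(q^j-1)/(q-1)$, the symmetry $\qbinom{m}{n}=\qbinom{m}{m-n}$, and the two $q$-Pascal recurrences $\qbinom{m}{k}=\qbinom{m-1}{k-1}+q^k\qbinom{m-1}{k}=q^{m-k}\qbinom{m-1}{k-1}+\qbinom{m-1}{k}$, all immediate from the product formula. Property (\ref{qBinomialProperty1}) is then a one-line cancellation: substituting the factorial expression, both sides collapse to the common value $[m]_q!/([k]_q![n-k]_q![m-n]_q!)$ once the numerator $[m]_q!$ is isolated.

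For (\ref{qBinomialProperty4}) I would argue by induction on $m$. Writing $f_m(z)=\prod_{l=0}^{m-1}(1+zq^l)$, the key observation is the functional recurrence $f_m(z)=(1+z)f_{m-1}(zq)$, obtained by reindexing the product. Expanding $f_{m-1}$ by the inductive hypothesis, extracting the coefficient of $z^i$, and cancelling the common power $q^{\binom{i}{2}}$ reduces the required identity exactly to $\qbinom{m}{i}=q^i\qbinom{m-1}{i}+\qbinom{m-1}{i-1}$, which is the first $q$-Pascal recurrence; the only arithmetic needed is the simplification $\binom{i-1}{2}+(i-1)-\binom{i}{2}=0$.

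Then (\ref{qBinomialProperty3}) follows from (\ref{qBinomialProperty1}) and (\ref{qBinomialProperty4}). Using (\ref{qBinomialProperty1}) in the form $\qbinom{m}{i}\qbinom{i}{n}=\qbinom{m}{n}\qbinom{m-n}{i-n}$, I would pull $\qbinom{m}{n}$ out of the sum, substitute $l=i-n$ and $M=m-n$, and reverse the summation via $\qbinom{M}{l}=\qbinom{M}{M-l}$. The inner sum becomes $\sum_{l}(-1)^{l}q^{\binom{l}{2}}\qbinom{M}{l}$, which is the specialization $z=-1$ of (\ref{qBinomialProperty4}); since the resulting product $\prod_{l=0}^{M-1}(1-q^l)$ has a vanishing $l=0$ factor for $M\ge 1$ and is an empty product equal to $1$ for $M=0$, the whole expression collapses to $\qbinom{m}{n}\delta_{M,0}=\delta_{m,n}$.

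Finally (\ref{qBinomialProperty2}), the $q$-Vandermonde convolution, I expect to be the \emph{main obstacle}, since it is the one statement not reducible to a single cancellation. I would prove it combinatorially: fixing a decomposition $\sF_q^{m+n}=U\oplus W$ with $\dim U=m$ and $\dim W=n$, I would classify the $k$-dimensional subspaces $S$ by the pair $(i,j)$ with $i=\dim S-\dim(S\cap W)$ and $j=\dim(S\cap W)$. Choosing the image of $S$ in the quotient $V/W\cong U$ contributes $\qbinom{m}{i}$, choosing $S\cap W$ contributes $\qbinom{n}{j}$, and counting the liftings of a fixed image with a fixed intersection contributes $q^{i(n-j)}$; summing over $i+j=k$ and equating with $\qbinom{m+n}{k}$ yields the identity. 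The delicate point is the lifting count, for which I would present the liftings as a torsor under $\mathrm{Hom}$ of the $i$-dimensional image into the $(n-j)$-dimensional quotient $W/(S\cap W)$. If this geometric bookkeeping proves awkward, the same identity can instead be obtained by induction on $n$ using the $q$-Pascal recurrence, at the cost of a more involved index manipulation of the exponent $q^{i(n-j)}$.
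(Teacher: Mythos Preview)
Your argument is correct in all four parts; the factorial cancellation for (\ref{qBinomialProperty1}), the induction via $f_m(z)=(1+z)f_{m-1}(zq)$ for (\ref{qBinomialProperty4}), the reduction of (\ref{qBinomialProperty3}) to the specialization $z=-1$ of (\ref{qBinomialProperty4}) after applying (\ref{qBinomialProperty1}), and the subspace-counting proof of the $q$-Vandermonde identity (\ref{qBinomialProperty2}) are all standard and sound. In particular, the torsor description of the liftings under $\mathrm{Hom}(\bar{S},W/(S\cap W))$ is exactly what gives the factor $q^{i(n-j)}$, so the part you flagged as the main obstacle goes through without difficulty.

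There is, however, nothing to compare against: the paper does not prove this proposition. It is stated as a toolbox of known identities for Gaussian coefficients, with the proofs outsourced to Proposition~8.5.2 and Exercise~8.5.7 of \cite{CST08} and to \cite{C08}. Your write-up therefore supplies what the paper deliberately omits; it is more self-contained than the paper, not a different route to the same destination.
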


\begin{proof}[Proof of Lemma \ref{StructureConstants}]
We use the same parameters of regular semilattice that we used in \cite{CST08},
and each parameters are  calculated:
\begin{equation}\label{eqrhorstmunupi}
\rho_{s,t}^u=\mu(s,u)\left[\sum_{j=0}^{u} \nu'(j,u) \pi(j,s,t)\right],
\end{equation}
where
\begin{align*}
&\mu(u,s)=\qbinom{d-u}{d-s}=\qbinom{d-u}{s-u} \text{ for $u\le s$ },\\
&\nu'(j,u)=(-1)^{u-j} q^{\binom{u-j}{2}} \binom{u}{j}_q,\\
&\text{ and }\\
&\pi(j,s,t)=\sum_{i=0}^{d}
\binom{d-s}{d-i}_q
q^{e(d-i)+\binom{d-i}{2}}
\binom{i-s+j}{t}_q \text{ for $s-j \le d-t, ~j \le s$}.
\end{align*}
The summation of (\ref{eqrhorstmunupi}) implies the following equation:
\begin{equation*}
\sum_{j=0}^{u} \nu'(j,u) \pi(j,s,t)
=\sum_{j=0}^{u} (-1)^{u-j} q^{\binom{u-j}{2}} \qbinom{u}{j} \sum_{i=0}^{d} 
\qbinom{d-s}{d-i} q^{e(d-i)+\binom{d-i}{2}} \qbinom{i-s+j}{t}.
\end{equation*}
By using  (\ref{qBinomialProperty2}) and (\ref{qBinomialProperty3})
in Proposition \ref{qBinomialProperties},
\begin{align*}
&\sum_{j=0}^{u} \nu'(j,u) \pi(j,s,t) \\
\qquad &=\sum_{i=0}^{d} \qbinom{d-s}{d-i} q^{e(d-i)+\binom{d-i}{2}}
\sum_{j=0}^{u} (-1)^{u-j} q^{\binom{u-j}{2}} \qbinom{u}{j} 
 \qbinom{i-s+j}{t}\\
\qquad  & =\sum_{i=0}^{d} \qbinom{d-s}{d-i} q^{e(d-i)+\binom{d-i}{2}}
\sum_{j=0}^{u} (-1)^{u-j} q^{\binom{u-j}{2}} \qbinom{u}{j} 
\sum_{h} \qbinom{i-s}{t-h}\qbinom{j}{h} q^{h(i-s-t+h)}\\
\qquad &=\sum_{i=0}^{d} \qbinom{d-s}{d-i} q^{e(d-i)+\binom{d-i}{2}}
\sum_{h}  \qbinom{i-s}{t-h} q^{h(i-s-t+h)} \delta_{u,h}\\
\qquad  & =\sum_{i=0}^{d} \qbinom{d-s}{d-i} q^{e(d-i)+\binom{d-i}{2}}
\qbinom{i-s}{t-u} q^{t(i-s-t+u)} \\ 
\qquad & =\sum_{i=0}^{d} q^{e(d-i)+\binom{d-i}{2} +t(i-s-t+u)}
\qbinom{d-s}{d-i} \qbinom{i-s}{t-u}.\\ 
\end{align*}
Hence, using (\ref{qBinomialProperty1}) in Proposition \ref{qBinomialProperties},
we have
$$
\sum_{j=0}^{u} \nu'(j,u) \pi(j,s,t) 
=\qbinom{d-s}{t-u}\sum_{i=0}^{d} q^{e(d-i)+\binom{d-i}{2} +t(i-s-t+u)}
 \qbinom{d-s-t+u}{i-s-t+u}.
$$
Also,
by change of a variable $d-i$ to $h$ and using (\ref{qBinomialProperty4}) in Proposition \ref{qBinomialProperties},
\begin{align*}
\rho_{s,t}^u 
&=\qbinom{d-u}{s-u}
\qbinom{d-s}{t-u}\sum_h q^{eh+\binom{h}{2} +u(d-h-s-t+u)}
 \qbinom{d-s-t+u}{h}\\
&=\qbinom{d-u}{s-u}\qbinom{d-s}{t-u} q^{u(d-s-t+u)} 
\sum_{h} q^{\binom{h}{2}} \qbinom{d-s-t+u}{h} (q^{e-u})^h\\
&=\qbinom{d-u}{r-u}\qbinom{d-s}{t-u} q^{u(d-s-t+u)} 
(-q^{e-u};q)_{d-s-t+u}.
\end{align*}
\end{proof}

%%%%%%%%%%%%%%%%%%%%%%%%%%
\section{Locality of the modular adjacency algebra of a dual polar scheme}
We consider the decomposition of $F \sX$ to indecomposable two-sided ideals ({\it blocks}) \cite{NT87}.
Let $k(F \sX)$ be the number of blocks of $F \sX$.
Since the value of $k(F \sX)$ is equal to the number of $i \in \{0,1,\dots,d\}$ such that 
$p \nmid \rho_{i,i}^i$ (see \cite{S07}),
we can calculate it:
\begin{cor}[\cite{S07}]\label{NumOfBlocks}
$$
k(F\sX)=\left\{ i \in \{0,\dots,d\} \mid p \nmid \prod_{l=0}^{d-i-1} (q^i+q^{e+l})\right\},
$$
where $e$ is $1$, $1$, $0$, $2$, $3/2$, $1/2$ in the respective cases $[C_d(q)]$, $[B_d(q)]$, $[D_d(q)]$, $[{}^2 D_{d+1}(q)]$, $[{}^2 A_{2d}(r)]$, $[{}^2 A_{2d-1}(r)]$ and $q=r^2$ for $[{}^2 A_{2d}(r)]$, $[{}^2A_{2d-1} (r)]$.
\end{cor}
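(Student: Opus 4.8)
The plan is to reduce the corollary to the counting principle already cited from \cite{S07}: the number of blocks $k(F\sX)$ equals the number of indices $i \in \{0,\dots,d\}$ for which $p \nmid \rho_{i,i}^i$. Granting this, it suffices to produce a transparent closed form for the ``diagonal'' structure constant $\rho_{i,i}^i$ and read off its divisibility by $p$. First I would simply specialize the formula of Lemma \ref{StructureConstants} to the case $s = t = u = i$.

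Carrying out that specialization, both Gaussian coefficients collapse to $\qbinom{d-i}{0} = 1$, the power of $q$ becomes $q^{i(d-i)}$, and the $q$-Pochhammer factor becomes $(-q^{e-i};q)_{d-i}$, so that
$$
\rho_{i,i}^i = q^{i(d-i)}\,(-q^{e-i};q)_{d-i}.
$$
Next I would expand the Pochhammer symbol by part (\ref{qBinomialProperty4}) of Proposition \ref{qBinomialProperties} with $z = q^{e-i}$ and $m = d-i$, producing the $d-i$ factors $1 + q^{e-i+l}$ for $0 \le l \le d-i-1$. Writing each factor as $1 + q^{e-i+l} = q^{-i}(q^i + q^{e+l})$ and collecting the $d-i$ resulting copies of $q^{-i}$ cancels the prefactor $q^{i(d-i)}$ exactly, which yields the clean identity
$$
\rho_{i,i}^i = \prod_{l=0}^{d-i-1} \bigl(q^i + q^{e+l}\bigr).
$$

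Since both sides are integers, the divisibility statement $p \nmid \rho_{i,i}^i$ is literally equivalent to $p \nmid \prod_{l=0}^{d-i-1}(q^i + q^{e+l})$, and substituting this equivalence into the counting principle gives the asserted description of $k(F\sX)$. The only point requiring care --- and the closest thing to an obstacle --- is the bookkeeping when $e$ is a half-integer: in the cases $[{}^2 A_{2d}(r)]$ and $[{}^2 A_{2d-1}(r)]$ one has $q = r^2$ with $e \in \{3/2,1/2\}$, so that $q^{e+l} = r^{2(e+l)}$ is an integer power of $r$ and each factor $q^i + q^{e+l}$ is a genuine integer. I would check that the exponent cancellation above is exact in this setting as well, so that the product formula, and hence the block count, holds uniformly across all six types.
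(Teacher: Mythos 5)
Your proposal is correct and follows essentially the same route as the paper: the paper likewise reduces the corollary to the criterion from \cite{S07} that $k(F\sX)$ equals the number of $i\in\{0,\dots,d\}$ with $p \nmid \rho_{i,i}^i$, and then reads off $\rho_{i,i}^i = q^{i(d-i)}(-q^{e-i};q)_{d-i} = \prod_{l=0}^{d-i-1}(q^i+q^{e+l})$ by specializing Lemma \ref{StructureConstants} at $s=t=u=i$, exactly as you do. Your explicit verification that the half-integer values of $e$ cause no trouble (since $q=r^2$ makes every factor $q^i+q^{e+l}=r^{2i}+r^{2e+2l}$ an integer) is a careful touch the paper leaves implicit, but it is the same argument.
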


When the number of blocks of $F\sX$ is only one, $F \sX$ is a local algebra.
By Corollary  \ref{NumOfBlocks}, we know the following facts:

\begin{cor}\label{ConditionOfLocal}
Let $F$ be a field of odd characteristic $p$, $q$ be an odd prime power,
$\sX$ be  one of the dual polar scheme on $[C_d(q)]$, $[B_d(q)]$, $[D_d(q)]$, $[{}^2 D_{d+1}(q)]$, $[{}^2 A_{2d}(r)]$, $[{}^2 A_{2d-1}(r)]$ and $q=r^2$ for $[{}^2 A_{2d}(r)]$, $[{}^2A_{2d-1} (r)]$.

If $q$ is a prime power of $p$, then $F \sX$ is not a local algebra.

If $q$ is not a prime power of $p$, then 
$F \sX$ is a local algebra if and only if 
\begin{enumerate}
\item $q \equiv -1 \pmod{p}$ and $d$ is an odd for dual polar schemes on $[C_d(q)]$, $[B_d(q)]$, 
\item $q \equiv -1 \pmod{p}$ and $d$ is an even for dual polar schemes on $[D_d(q)]$, $[{}^2 D_{d+1}(q)]$,
\item $r \equiv -1 \pmod{p}$ for dual polar schemes on $[{}^2 A_{2d}(r)]$, $[{}^2 A_{2d-1}(r)]$.
\end{enumerate}
\end{cor}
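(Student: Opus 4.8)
The plan is to reduce the whole statement to the block-count formula of Corollary \ref{NumOfBlocks} and then to an elementary question about when a power of $q$ (or of $r$) is $\equiv -1 \pmod p$. Write $P_i=\prod_{l=0}^{d-i-1}(q^i+q^{e+l})$, so that Corollary \ref{NumOfBlocks} says $k(F\sX)$ is the number of indices $i\in\{0,\dots,d\}$ with $p\nmid P_i$, and $F\sX$ is local exactly when $k(F\sX)=1$. The index $i=d$ always contributes, since $P_d$ is an empty product equal to $1$. Hence the corollary amounts to: $F\sX$ is local iff $p\mid P_i$ for every $i\in\{0,\dots,d-1\}$, i.e.\ iff each such product has a factor divisible by $p$. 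First I would dispose of the case $p\mid q$ (where $q=r^2$ forces $p\mid r$ in the Hermitian types): then every factor of $P_0=\prod_{l=0}^{d-1}(1+q^{e+l})$ is congruent to $1$ or $2$ modulo $p$, so $p\nmid P_0$ as $p$ is odd, and $i=0$ contributes a second block, proving non-locality.

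Assume henceforth $p\nmid q$, so $q$ and $r$ are units modulo $p$. Dividing by the unit $q^i$ shows $p\mid q^i+q^{e+l}$ iff $a^{m}\equiv -1\pmod p$, where in the four non-Hermitian types $a=q$ and $m=e+l-i$ runs over $d-i$ consecutive integers, while in the two Hermitian types $q=r^2$ lets us take $a=r$ and $m=2(e+l-i)$, an odd integer running over $d-i$ consecutive odd integers. Thus ``$P_i$ has a factor divisible by $p$'' says exactly that this progression of exponents meets the set $T=\{m\in\mathbb Z : a^{m}\equiv -1\pmod p\}$. Writing $n$ for the multiplicative order of $a$ modulo $p$, the set $T$ is empty when $n$ is odd and equals $n/2+n\mathbb Z$ when $n$ is even.

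The decisive and most delicate part is the ``only if'' direction, and this is where I expect the main obstacle. I would exploit the two shortest products $P_{d-1}$ and $P_{d-2}$, which involve respectively one exponent $m_0$ and two exponents adjacent to it (differing from $m_0$ by $1$ in the integer cases, by $2$ in the Hermitian ones). If $n$ is odd then $T=\emptyset$ and $P_{d-1}$ is coprime to $p$. If $n$ is even with $n>2$: should $m_0\notin T$, then again $P_{d-1}$ is coprime to $p$; should $m_0\in T$, then — since consecutive elements of $T$ are spaced $n$ apart, and in the Hermitian case $T$ contains no odd element at all unless $n\equiv 2\pmod 4$ — both exponents of $P_{d-2}$ fall strictly between neighbouring elements of $T$, so $P_{d-2}$ is coprime to $p$. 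In every subcase a second index $i<d$ contributes a block, so $F\sX$ is not local. Hence locality forces $n=2$, i.e.\ $q\equiv -1\pmod p$ in the non-Hermitian cases and $r\equiv -1\pmod p$ in the Hermitian cases.

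Finally, under $n=2$ I would run the ``if'' direction. In the Hermitian cases every exponent $m$ is odd, so $a^{m}=r^{m}\equiv -1$ and every factor of every $P_i$ with $i<d$ is divisible by $p$; thus $F\sX$ is local for all $d$, giving case (iii) (and explaining why no parity hypothesis on $d$ appears there). In the four non-Hermitian cases $a^{m}=q^{m}\equiv -1$ iff $m$ is odd; for $i\le d-2$ the exponent range contains at least two consecutive integers, hence an odd one, so $P_i\equiv 0$ automatically, whereas for $i=d-1$ the single exponent equals $e-d+1$, which is odd precisely when $d\equiv e\pmod 2$. This yields ``$d$ odd'' when $e=1$ (types $[C_d],[B_d]$) and ``$d$ even'' when $e\in\{0,2\}$ (types $[D_d],[{}^2D_{d+1}]$), matching cases (i) and (ii). The few genuinely small cases, where $P_{d-2}$ is unavailable, would be checked directly.
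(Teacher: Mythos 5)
Your proposal is correct for $d\ge 2$ and rests on the same skeleton as the paper's proof: reduce locality to the block count of Corollary \ref{NumOfBlocks} (the index $i=d$ always giving one block since $\rho_{d,d}^d=1$), and then test divisibility of the diagonal constants $\rho_{i,i}^i$ at the few indices nearest $d$. The difference is in execution. The paper writes out a single family: it extracts $q\equiv\pm 1\pmod p$ from $\rho_{0,0}^0$, $\rho_{d-2,d-2}^{d-2}$ and $\rho_{d-1,d-1}^{d-1}$, kills $q\equiv 1$ because $p$ is odd, reads the parity of $d$ off $\rho_{d-1,d-1}^{d-1}$, and asserts sufficiency and the remaining types in one line. (In fact the constants it displays, e.g.\ $\rho_{d-1,d-1}^{d-1}=1+q^{d-1}$, are those of the parameter $e=0$ rather than the announced $e=1$, and its conclusion ``$d$ even'' matches item (ii) rather than item (i) -- a slip that your uniform setup avoids.) Your formulation via the multiplicative order $n$ of $a$ and the set $T=\{m : a^m\equiv -1 \pmod p\}$ -- empty for $n$ odd, the progression $n/2+n\mathbb Z$ for $n$ even -- treats all six families and both implications simultaneously, and it correctly handles the one genuinely delicate Hermitian subcase: when $n\equiv 0\pmod 4$ the odd exponent $m_0$ cannot lie in $T$ at all, so the potential collision of $m_0+4$ with $m_0+n$ at $n=4$ never arises. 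That uniformity, including a real proof of sufficiency, is a clear gain over the paper.

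One caveat: the ``few genuinely small cases'' you defer are not a formality, and a direct check there does not confirm the statement -- it refutes it as printed. For $d=1$ your mechanism (which needs $P_{d-2}$) is unavailable, and for $[{}^2D_2(q)]$ one has $P_0=1+q^2$, so $p=5$, $q=3$ yields a local algebra with $q\not\equiv -1\pmod 5$; likewise for $[{}^2A_2(r)]$ one has $P_0=1+r^3$, so $p=7$, $r=3$ (order $6$) yields a local algebra with $r\not\equiv -1\pmod 7$. So the ``only if'' direction of items (ii) and (iii) genuinely requires $d\ge 2$; the paper's own proof, which invokes $\rho_{d-2,d-2}^{d-2}$ and even $q^{d-3}$, has the same blind spot, so this is a defect of the statement rather than of your argument. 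You should make the hypothesis $d\ge 2$ explicit instead of promising that the small cases check out.
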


\begin{proof}
We assume that $q$ is a prime power of $p$.
Since $\rho_{0,0}^{0} \equiv 1 \pmod{p}$ and the value of $\rho_{d,d}^d$ is one,
there is at least 2 blocks of $F \sX$. 
Thus $F \sX$ is not a local algebra.

We prove only the first case in the latter part.
We assume that $F \sX$ is a modular adjacency algebra on $[C_d(q)]$ or $[B_d(q)]$ of class $d$ and  $q$ is a prime power which is not divided by $p$.
If $F \sX$ is a local algebra, $p$ has to divide $\rho_{i,i}^i$ for any $i \in \{0,1,\dots,d-1\}$.
Since $p$ divides
\[ \rho_{0,0}^0 = \prod_{l=0}^{d-1}(1+q^l),\]
\[\rho_{d-2,d-2}^{d-2}=q(1+q^{d-2})(1+q^{d-3})\] 
and 
\[ \rho_{d-1,d-1}^{d-1} = 1+q^{d-1}, \] 
this lead to $q \equiv -1 \pmod{p}$ or $q \equiv 1 \pmod{p}$.
The case of $q \equiv 1 \pmod{p}$ is  a contradiction  with the fact that $p$ is an odd prime.
Thus, we have $q \equiv -1 \pmod {p}$.
Now we suppose that $d$ is an odd number.
Since $q^2 \equiv 1 \pmod{p}$,
$$\rho_{d-1,d-1}^{d-1} = (q^{d-1}+1) \equiv  2 \not\equiv 0 \pmod{p} .$$
Hence $d$ must be an even number.
To prove the sufficient condition assume that  $F \sX$ is a modular adjacency algebra on $[C_d(q)]$ or $[B_d(q)]$ of class  $d$ which is an even number and $q$ is a prime power such that $q +1\equiv 0 \pmod{p}$.
Since $p$ divides $\rho_{i,i}^i$ for any $i \in \{0,1,\dots,d-1\}$, 
$F \sX$ is a local algebra.  
\end{proof}
\begin{remark}
If $p$ divides $q$, then $F \sX$ is not a local algebra.
However, we have known the structure  of $F \sX$ for  $[C_n(q)]$, $[B_n(q)]$, $[{}^2 D_{n+1}(q)]$, $[{}^2 A_{2n}(r)]$, $[{}^2 A_{2n-1}(r)]$ in our results \cite{SY15}: 
$$
F \sX \cong F \oplus F[X]/(X^d).
$$
\end{remark}

A parameter $e$ of dual polar scheme  is   $1$, $1$, $0$, $2$, $3/2$, $1/2$ in the respective cases $[C_d(q)]$, $[B_d(q)]$, $[D_d(q)]$, $[{}^2 D_{d+1}(q)]$, $[{}^2 A_{2d}(r)]$.
Since a dual polar scheme on $[C_d(q)]$  has the same parameter $e$ as a dual polar scheme on $[B_d(q)]$, they are (algebraically) isomorphic.
We consider an (algebraically) isomorphism between dual polar schemes on different types except them.
Let $\sX_{q,d}^{e}$ be one of the dual polar scheme on  $[C_d(q)]$, $[B_d(q)]$, $[D_d(q)]$, $[{}^2 D_{d+1}(q)]$, $[{}^2 A_{2d}(r)]$, $[{}^2 A_{2d-1}(r)]$ and $q=r^2$ for $[{}^2 A_{2d}(r)]$, $[{}^2A_{2d-1} (r)]$ with parameter $e$.
\begin{prop}\label{IsomorphicCondition}
Let $F$ be  a field of characteristic $p$, $q$ be a prime power such that $p \nmid q_1,q_2$ and
If $q_1 \equiv q_2 \pmod{p}$ and 
$q_1^{e_1} \equiv q_2^{e_2} \pmod{p}$.
Then there is an isomorphism:
$$
F \sX_{q_1,d}^{e_1} \cong F\sX_{q_2,d}^{e_2},
$$
given by $C_r \mapsto \hat{C_r}$ ($0 \le r \le d$) where $\hat{C_r}$ is the $r$-th Riemann base of $F \sX_{q_2,d}^{e_2}.$
\end{prop}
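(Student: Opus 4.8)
The plan is to reduce the claim to a single family of congruences between structure constants, and then to read those congruences off the closed formula of Lemma~\ref{StructureConstants}. Both $\{C_r\}_{r=0}^d$ and $\{\hat C_r\}_{r=0}^d$ are $F$-bases of $F\sX_{q_1,d}^{e_1}$ and $F\sX_{q_2,d}^{e_2}$, so the prescribed map $C_r \mapsto \hat C_r$ is already an $F$-linear bijection, and the only thing to check is that it respects multiplication. Denoting by $\rho_{s,t}^u$ and $\hat\rho_{s,t}^u$ the structure constants of the two algebras in their Riemann bases, the map is an algebra isomorphism precisely when
$$
\rho_{s,t}^u \equiv \hat\rho_{s,t}^u \pmod p \qquad \text{for all } 0 \le u \le \min(s,t) \le d.
$$

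First I would rewrite the formula of Lemma~\ref{StructureConstants} so that its dependence on the parameters becomes transparent. Absorbing the power of $q$ into the $q$-Pochhammer factor by way of Proposition~\ref{qBinomialProperties}(\ref{qBinomialProperty4}) gives
$$
q^{u(d-s-t+u)}(-q^{e-u};q)_{d-s-t+u} = \prod_{l=0}^{d-s-t+u-1}\bigl(q^{u} + q^{e+l}\bigr),
$$
so that
$$
\rho_{s,t}^u = \qbinom{d-u}{s-u}\qbinom{d-s}{t-u}\prod_{l=0}^{d-s-t+u-1}\bigl(q^{u} + q^{e+l}\bigr).
$$
Since the Gaussian coefficients are polynomials in $q$ with integer coefficients and each factor $q^u + q^{e+l}$ is a polynomial in $q$ and in the single quantity $q^e$, the right-hand side is the value at $(q, q^e)$ of a fixed polynomial $P_{s,t}^u(x,y) \in \mathbb Z[x,y]$. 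Crucially this polynomial is the same for both schemes, because Lemma~\ref{StructureConstants} is uniform across all types of dual polar scheme and encodes the type only through $e$.

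Second I would invoke the hypotheses. In every case $q^e$ is an integer (an odd power of $r$ in the two Hermitian cases, where $q=r^2$), so $P_{s,t}^u$ is evaluated at the integer points $(q_1, q_1^{e_1})$ and $(q_2, q_2^{e_2})$. Because reduction modulo $p$ is a ring homomorphism $\mathbb Z \to F$, evaluation of an integer-coefficient polynomial commutes with it; hence $q_1 \equiv q_2 \pmod p$ and $q_1^{e_1} \equiv q_2^{e_2} \pmod p$ yield
$$
\rho_{s,t}^u = P_{s,t}^u\!\bigl(q_1, q_1^{e_1}\bigr) \equiv P_{s,t}^u\!\bigl(q_2, q_2^{e_2}\bigr) = \hat\rho_{s,t}^u \pmod p
$$
for every admissible triple $(s,t,u)$. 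This is exactly the multiplicativity condition above, so the map is the desired isomorphism.

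The step I expect to require the most care is the reduction of the formula of Lemma~\ref{StructureConstants} to a polynomial in the two quantities $q$ and $q^e$: one must carry out the absorption of $q^{u(d-s-t+u)}$ into the $q$-Pochhammer factor correctly so that the result has only non-negative integer powers and thus genuinely lies in $\mathbb Z[x,y]$, and one must observe that the type of the scheme enters solely through $e$. The half-integer values of $e$ in the Hermitian cases are precisely what makes it essential that the hypothesis be phrased as $q_1^{e_1} \equiv q_2^{e_2} \pmod p$ rather than as a condition on $q_1, q_2$ alone; once this identification is in place, the congruence of structure constants is immediate.
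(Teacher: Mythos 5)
Your proof is correct and takes essentially the same route as the paper: both arguments reduce the claim to the congruence $\rho_{s,t}^u \equiv \hat{\rho}_{s,t}^u \pmod{p}$ of structure constants in the Riemann bases and deduce it from the closed formula of Lemma~\ref{StructureConstants} together with the two hypotheses $q_1 \equiv q_2$ and $q_1^{e_1} \equiv q_2^{e_2} \pmod{p}$. Your absorption of $q^{u(d-s-t+u)}$ into the Pochhammer factor, exhibiting each $\rho_{s,t}^u$ as the value of a single integer polynomial at $(q,q^e)$, is merely a slightly tidier bookkeeping of the fractional and negative exponents $e-u$ than the paper's direct congruences of Gaussian coefficients and $q$-Pochhammer symbols.
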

\begin{proof}
If $q_1 \equiv q_2 \pmod{p}$ and ${q_1}^{e_1} \equiv {q_2}^{e_2} \pmod{p}$, then
$$
\binom{a}{b}_{q_1} \equiv \binom{a}{b}_{q_2} \pmod{p}
$$
and 
the values of $q$-Pochhammer symbols are congruent modulo $p$:
$$
(-{q_1}^{e_1-u};{q_1})_{d-s-t+u-1} \equiv (-{q_2}^{e_2-u};{q_2})_{d-s-t+u-1}  \pmod{p}
$$
for $s,t,u \in \{0,1,\dots,d\}.$
The above congruences and Lemma $\ref{StructureConstants}$ follow that
$$
\rho_{s,t}^u \equiv \hat{\rho}_{s,t}^u \pmod{p} \text{ for $s,t,u \in \{0,1,\dots,d \},$}
$$
where $\hat{\rho}_{s,t}^u$ is a structure constant of $F \sX_{q_2,d}^{e_2}.$
This implies that they are isomorphic.
\end{proof}
If the modular adjacency algebras of dual polar schemes on  $[D_d(q)]$ and $[{}^2 D_{d+1}(q)]$ ($[{}^2 A_{2d}(r)]$ and $[{}^2 A_{2d-1}(r)]$, respectively) are local algebra, then $q \equiv -1 \pmod{p}$ ($r \equiv -1 \pmod{p}$, respectively) by Corllary \ref{ConditionOfLocal}.
Since $q^2 \equiv 1 \pmod{p}$ ($q=r^2 \equiv 1 \pmod{p}$, respectivly),
$$
q^0 \equiv q^2 \pmod{p}
\text{
$(q^{1/2} \equiv q^{3/2} \pmod{p}$, respectively).}$$
By Proposition \ref{IsomorphicCondition}, we conclude the following corollary.
\begin{cor}\label{IsomorphicA}
Let $F$ be a field of characteristic $p$, $q$ and $r$ be a prime power which is not divided by $p$.
If dual polar schemes on  $[D_d(q)]$ and $[{}^2 D_{d+1}(q)]$ ($[{}^2 A_{2d}(r)]$ and $[{}^2 A_{2d-1}(r)]$, respectively) are local algebras,
then they are isomorphic.
\end{cor}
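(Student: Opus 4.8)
The plan is to reduce the statement directly to Proposition \ref{IsomorphicCondition} by verifying its two hypotheses in each of the two relevant pairs of types. First I would record the parameter values $e$ attached to each type: $e=0$ for $[D_d(q)]$ and $e=2$ for $[{}^2 D_{d+1}(q)]$, while $e=3/2$ for $[{}^2 A_{2d}(r)]$ and $e=1/2$ for $[{}^2 A_{2d-1}(r)]$. Since in each pair both members are built over the same underlying prime power ($q$ in the $D$-case, and $r$ via $q=r^2$ in the $A$-case), the first congruence $q_1\equiv q_2\pmod p$ of Proposition \ref{IsomorphicCondition} is automatic; the only thing left to check is the second congruence $q_1^{e_1}\equiv q_2^{e_2}\pmod p$ relating the two values of $e$.

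Next I would extract the arithmetic constraint forced by the locality hypothesis. By Corollary \ref{ConditionOfLocal}, if the dual polar scheme on $[D_d(q)]$ (or $[{}^2 D_{d+1}(q)]$) is local then $q\equiv -1\pmod p$, and if the scheme on $[{}^2 A_{2d}(r)]$ (or $[{}^2 A_{2d-1}(r)]$) is local then $r\equiv -1\pmod p$. Squaring in either case yields $q^2\equiv 1\pmod p$ in the $D$-case and $r^2=q\equiv 1\pmod p$ in the $A$-case, and these are exactly the relations I will feed into the exponent congruence.

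With these constraints the remaining congruence is immediate. In the $D$-case one needs $q^{0}\equiv q^{2}\pmod p$, i.e.\ $1\equiv q^2\pmod p$, which is the relation just obtained. In the $A$-case, reading the half-integer exponents through $q=r^2$ so that $q^{1/2}=r$ and $q^{3/2}=r^3$, one needs $r\equiv r^3\pmod p$; since $r\equiv -1$ gives $r^3\equiv(-1)^3=-1\equiv r\pmod p$, this holds as well. Applying Proposition \ref{IsomorphicCondition} with $q_1=q_2$ and the two admissible exponents then produces the asserted isomorphisms $F\sX_{q,d}^{0}\cong F\sX_{q,d}^{2}$ and $F\sX_{r^2,d}^{1/2}\cong F\sX_{r^2,d}^{3/2}$, which is precisely the content of the corollary.

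The step that I expect to demand the most care is the bookkeeping of the fractional parameters in the $A$-cases: one must ensure that $q^{e}$ in Proposition \ref{IsomorphicCondition} is evaluated as a genuine integer power of $r$ (namely $q^{1/2}=r$ and $q^{3/2}=r^3$) before reducing modulo $p$, rather than treated as a literal fractional power of $q$. Once this interpretation is pinned down, the $A$-case congruence follows from $r\equiv -1$ exactly as cleanly as the $D$-case congruence follows from $q\equiv -1$, and no genuine computation beyond these observations is required.
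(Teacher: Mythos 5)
Your proposal is correct and follows essentially the same route as the paper: invoke Corollary \ref{ConditionOfLocal} to get $q\equiv -1\pmod p$ (resp.\ $r\equiv -1\pmod p$), deduce $q^{0}\equiv q^{2}$ (resp.\ $q^{1/2}\equiv q^{3/2}$, i.e.\ $r\equiv r^{3}$) modulo $p$, and apply Proposition \ref{IsomorphicCondition}. Your explicit remark that the half-integer exponents in the $A$-cases must be read as integer powers of $r$ via $q=r^{2}$ is a helpful clarification of a step the paper leaves implicit, but it is not a different argument.
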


%%%%%%%%%%%%%%%%%%%%%%%%%%
\section{The structure of the modular adjacency algebra of a dual polar scheme when it is local}
In the remaining part of this paper we consider the structure of the modular adjacency algebra of a dual polar scheme when it is a local algebra.
Therefore, we assume that $p$ is an odd prime and the following conditions hold:
\begin{enumerate}
\item $q \equiv -1 \pmod{p}$ and $d$ is an odd number for dual polar schemes on $[C_d(q)]$, $[B_d(q)]$, 
\item $q \equiv -1 \pmod{p}$ and $d$ is an even number for dual polar schemes on $[D_d(q)]$, $[{}^2 D_{d+1}(q)]$,
\item $r \equiv -1 \pmod{p}$ for dual polar schemes on $[{}^2 A_{2d}(r)]$, $[{}^2 A_{2d-1}(r)]$.
\end{enumerate}
The next proposition decides the structure of the modular adjacency algebra of a dual polar scheme when it is a local algebra.
\begin{prop}\label{MatrixOfLocal}
Let $F$ be a field of odd characteristic $p$, $q$ be an odd prime power,
$\sX$ be a dual polar scheme.
If $F \sX$ be a local algebra, then
$$
\rho_{s,t}^u \equiv 
\begin{cases}
\qbinom{d-u}{s-u} \pmod{p} & \text{  if $d-s=t-u$ },\\
0 \pmod{p} & \text{ otherwise }.
\end{cases}
$$
\end{prop}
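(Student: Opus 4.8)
The plan is to substitute the closed form of Lemma~\ref{StructureConstants} and reduce it modulo $p$ using the locality hypotheses recorded in Corollary~\ref{ConditionOfLocal}. Write $m=d-s-t+u$. If $t-u>d-s$, i.e.\ $m<0$, then $\qbinom{d-s}{t-u}=0$ because its upper index is a nonnegative integer strictly smaller than its lower index; hence $\rho_{s,t}^u=0$, and since $d-s\ne t-u$ the claimed congruence holds. So we may assume $m\ge 0$, and then $m=0$ is precisely the condition $d-s=t-u$. When $m=0$ the $q$-Pochhammer symbol $(-q^{e-u};q)_0$ is the empty product $1$, the power $q^{u(d-s-t+u)}$ equals $q^0=1$, and $\qbinom{d-s}{t-u}=\qbinom{d-s}{d-s}=1$, so the formula collapses to $\rho_{s,t}^u=\qbinom{d-u}{s-u}$ with no reduction needed. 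Thus the whole content of the proposition is to prove $\rho_{s,t}^u\equiv 0\pmod p$ whenever $m\ge 1$; since $p\nmid q$ the factor $q^{um}$ is a unit, so it suffices to exhibit, in each local case, one of the remaining factors that vanishes modulo $p$.

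I would dispose of the Hermitian cases $[{}^2A_{2d}(r)]$, $[{}^2A_{2d-1}(r)]$ first, since there a single factor of the $q$-Pochhammer symbol already does the job. Here $q=r^2$, $e\in\{3/2,1/2\}$, and $r\equiv -1\pmod p$ by Corollary~\ref{ConditionOfLocal}. By~(\ref{qBinomialProperty4}) in Proposition~\ref{qBinomialProperties} each factor is $1+q^{e-u+l}=1+r^{2(e-u+l)}$, and the exponent $2(e-u+l)=2e-2u+2l$ is odd because $2e\in\{3,1\}$. Therefore $r^{2(e-u+l)}\equiv(-1)^{\mathrm{odd}}=-1\pmod p$ and $1+r^{2(e-u+l)}\equiv 0\pmod p$ for every $l$, so $(-q^{e-u};q)_m\equiv 0\pmod p$ as soon as $m\ge 1$.

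In the remaining local cases $q\equiv -1\pmod p$ with $e\in\{0,1,2\}$ an integer, and I would reduce each Gaussian coefficient modulo $p$ by evaluating the corresponding Gaussian polynomial at $q=-1$, which is legitimate since those polynomials have integer coefficients. The $q$-Pochhammer factor $1+q^{e-u+l}\equiv 1+(-1)^{e-u+l}\pmod p$ vanishes exactly when $e-u+l$ is odd. As $l$ runs over $0,\dots,m-1$ the exponents $e-u+l$ form $m$ consecutive integers, so an odd one is present whenever $m\ge 2$, and also when $m=1$ with $e-u$ odd; in all these cases $(-q^{e-u};q)_m\equiv 0\pmod p$. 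The single residual subcase is $m=1$ with $e-u$ even, where $1+q^{e-u}\equiv 2\not\equiv 0$; this is where the argument is most delicate. Here $t-u=d-s-1$, so $\qbinom{d-s}{t-u}=\qbinom{d-s}{1}=1+q+\dots+q^{d-s-1}$, whose value at $q=-1$ is $0$ exactly when $d-s$ is even, settling that subcase.

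It remains to treat $m=1$, $e-u$ even and $d-s$ odd, and I expect this parity bookkeeping to be the main obstacle. The plan is to show that $\qbinom{d-u}{s-u}$ itself vanishes modulo $p$, using the classical evaluation of a Gaussian polynomial at $q=-1$, which is $0$ whenever the upper index is even and the lower index is odd. By Corollary~\ref{ConditionOfLocal} the locality parities are $d$ odd with $u\equiv e\equiv 1\pmod 2$ for $[C_d(q)],[B_d(q)]$, and $d$ even with $u\equiv e\equiv 0\pmod 2$ for $[D_d(q)],[{}^2D_{d+1}(q)]$. Combining these with $e-u$ even and $d-s$ odd, a direct parity check gives in both cases that $d-u$ is even while $s-u$ is odd, so $\qbinom{d-u}{s-u}\equiv 0\pmod p$ and hence $\rho_{s,t}^u\equiv 0\pmod p$. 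This exhausts all cases and completes the proof.
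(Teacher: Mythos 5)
Your proposal is correct and follows essentially the same route as the paper's proof: substitute the closed form of Lemma~\ref{StructureConstants}, use $q\equiv -1\pmod p$ (resp.\ $r\equiv -1\pmod p$ in the Hermitian cases) to kill the $q$-Pochhammer factors by parity of the exponents, and settle the residual $d-s-t+u=1$ subcase by parity bookkeeping, your evaluation-at-$q=-1$ criterion for the vanishing of $\qbinom{n}{k}$ (upper index even, lower index odd) being exactly equivalent to the paper's Ferrers-diagram complementation argument ($k(n-k)$ odd). If anything your write-up is more complete than the paper's, which proves only the $[C_d(q)]$ case (with a parity typo asserting $d$ even where Corollary~\ref{ConditionOfLocal} forces $d$ odd) and in the residual subcase shows the surviving Pochhammer factor $q^u+q$ vanishes, whereas you instead show $\qbinom{d-u}{s-u}\equiv 0$ there and explicitly carry out all six families, including the Hermitian ones where $q\equiv 1\pmod p$ and the Pochhammer factor alone suffices.
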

\begin{proof}
We only prove for a dual polar scheme on $[C_d(q)]$; proofs of other types are similar. 
Since we assume that  $F \sX$ is a local algebra, put $q \equiv -1 \pmod{p}$ and $d$ is an even number by Corollary \ref{ConditionOfLocal}.
Using a combinatorial interpretation of the Gaussian coefficient as a polynomial in $q$ by Theorem 24.2 in \cite{V01},
$$
\qbinom{n}{k}= \sum_{l=0}^{k(n-k)} a_l q^l,
$$
where the coefficient $a_l$ is the number of partitions of $l$ whose Ferrers diagrams fit in a box of size $k \times (n-k).$
The following equation is due to a complement of each Ferrers diagram in the box.
$$a_{l}=a_{k(n-k)-l}.$$
This yields that $\qbinom{n}{k} \equiv 0 \pmod{p}$ when $k(n-k)$ is an odd number
because we assume that $q \equiv -1 \pmod{p}$.
Hence it is enough to consider the case of 
$(t-u)\{(d-s)-(t-u)\}$ and $(d-s)(s-u)$ are even.
Furthermore,
we can assume that 
$d-s-t+u $ is $0$ or $1$. 
It is caused by functors of the structure constant: 
$$
q^{u(d-s-t+u)} (-q^{e-u};q)_{d-s-t+u} = \prod_{l=0}^{d-s-t+u-1} (q^u+q^{1+l}).
$$
When the parity of exponents of $q$ is different,
$q^u + q^{1+l} \equiv 0 \pmod{p}.$
If $d-s-t+u = 1$, then
our assumptions lead to $u$ is an even number.
This means that
$$q^u+q^1 \equiv 0 \pmod{p}.$$
Hence  the value of $\rho_{s,t}^u$ is  congruent to $0$ except the case of $d-s-t+u=0.$
\end{proof}

We need to proof some properties for dual polar schemes on $[^2 A_{2d}(r)]$ before we consider 
Theorem \ref{MainTheorem1}.

\begin{cor}\label{EpiOfA}
Let $F$ be a field of odd characteristic $p$, $r$ be an odd prime power which is not divided by $p$ and
$\sA_d$ be a dual polar scheme on $[^2 A_{2d}(r)]$.
If $F \sA_{d}$ is a local algebra,
then there is an epimorphism such that
$$
\varphi_{d+1} : F \sA_{d+1} \longrightarrow F \sA_{d},
$$
given by $C_{s} \mapsto \hat{C}_{s-1}$ ($0 \le s\le d+1$) where $\hat{C}_{s-1}$ is the $(s-1)$-th Riemann base of $F \sA_{d}$ and $\hat{C}_{-1} = 0.$
\end{cor}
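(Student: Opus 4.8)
The plan is to check that the prescribed linear map $\varphi_{d+1}$ is a surjective algebra homomorphism, with the entire verification resting on Proposition~\ref{MatrixOfLocal}. First I would record a preliminary observation: by Corollary~\ref{ConditionOfLocal} the locality of a dual polar scheme on $[{}^2 A_{2d}(r)]$ is governed by the single condition $r \equiv -1 \pmod{p}$, which does not depend on the class. Consequently the hypothesis that $F \sA_d$ is local forces $F \sA_{d+1}$ to be local as well, so that Proposition~\ref{MatrixOfLocal} applies to \emph{both} algebras at once. Surjectivity of $\varphi_{d+1}$ is then immediate, since the images $\varphi_{d+1}(C_1),\dots,\varphi_{d+1}(C_{d+1})$ are exactly $\hat C_0,\dots,\hat C_d$, which form a basis of $F\sA_d$.

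The substantive part is multiplicativity, $\varphi_{d+1}(C_s C_t)=\varphi_{d+1}(C_s)\varphi_{d+1}(C_t)$. Applying $\varphi_{d+1}$ to the expansion $C_s C_t=\sum_{v=0}^{\min(s,t)}\rho_{s,t}^v C_v$ and using $\hat C_{-1}=0$ kills the $v=0$ term, so after reindexing $u=v-1$ the left-hand side becomes $\sum_{u=0}^{\min(s,t)-1}\rho_{s,t}^{u+1}\hat C_u$. On the other hand $\varphi_{d+1}(C_s)\varphi_{d+1}(C_t)=\hat C_{s-1}\hat C_{t-1}=\sum_{u=0}^{\min(s-1,t-1)}\hat\rho_{s-1,t-1}^{\,u}\hat C_u$, and since $\min(s,t)-1=\min(s-1,t-1)$ the two summation ranges coincide. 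Thus the homomorphism property reduces to the single congruence
\[
\rho_{s,t}^{u+1}\equiv\hat\rho_{s-1,t-1}^{\,u}\pmod{p}
\qquad(0\le u\le\min(s-1,t-1)),
\]
where $\rho$ denotes a structure constant of the class-$(d+1)$ algebra $F\sA_{d+1}$ and $\hat\rho$ one of the class-$d$ algebra $F\sA_d$.

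This congruence is where Proposition~\ref{MatrixOfLocal} does the work. For the class-$(d+1)$ constant $\rho_{s,t}^{u+1}$ the nonvanishing criterion $(d+1)-s=t-(u+1)$ reads $d-s-t+u+2=0$, while for the class-$d$ constant $\hat\rho_{s-1,t-1}^{\,u}$ the criterion $d-(s-1)=(t-1)-u$ reads the \emph{same} equation $d-s-t+u+2=0$; hence the two sides vanish simultaneously. When they do not vanish, Proposition~\ref{MatrixOfLocal} gives $\rho_{s,t}^{u+1}\equiv\qbinom{d-u}{s-u-1}$ and $\hat\rho_{s-1,t-1}^{\,u}\equiv\qbinom{d-u}{s-u-1}$, which agree, so the congruence holds and $\varphi_{d+1}$ is a homomorphism.

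I do not expect a deep obstacle once Proposition~\ref{MatrixOfLocal} is in hand; the hard part is purely the bookkeeping and the edge cases. The points demanding care will be the index shift (ensuring the $v=0$ term is absorbed by the convention $\hat C_{-1}=0$ and that the summation ranges match after reindexing), the degenerate cases $s=0$ or $t=0$ where both sides collapse to $0$ because $C_0=J\mapsto\hat C_{-1}=0$, and above all the preliminary remark that $F\sA_{d+1}$ inherits locality, without which Proposition~\ref{MatrixOfLocal} could not be invoked for the source algebra. It is also worth confirming that $\varphi_{d+1}(C_{d+1})=\hat C_d$ is the identity of $F\sA_d$, so that $\varphi_{d+1}$ is in fact a unital epimorphism.
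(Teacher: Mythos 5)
Your proposal is correct and takes essentially the same route as the paper: both reduce the homomorphism property to the single congruence $\rho_{s,t}^{u+1,d+1}\equiv\hat{\rho}_{s-1,t-1}^{u,d}\pmod{p}$ and verify it via Proposition~\ref{MatrixOfLocal}, with surjectivity being immediate. You merely make explicit some details the paper leaves implicit, namely that $F\sA_{d+1}$ inherits locality from $F\sA_d$ (since the criterion $r\equiv -1\pmod{p}$ in Corollary~\ref{ConditionOfLocal} is independent of the class, which is needed to apply Proposition~\ref{MatrixOfLocal} to the source algebra), the index bookkeeping with $\hat{C}_{-1}=0$, and the degenerate cases $s=0$ or $t=0$.
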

\begin{proof}
By Proposition \ref{MatrixOfLocal}, 
we have a congruence:
$$
\rho_{s,t}^{u,d+1}\equiv\rho_{s-1,t-1}^{u-1,d}\pmod{p} \text{ for $s,t,u \in \{1,2,\dots,d+1\}$,}
$$
where $\rho_{s,t}^{u,d}$ is a structure constant of $F \sA_{d}.$
This congruence leads to the map $\varphi_{d+1}$ being an algebraic homomorphism.
Thus the map $\varphi_{d+1}$ is an epimorphism since it is clearly surjective.
\end{proof}

\begin{prop}\label{TensorDecomposition}
Let $F$ be a field of odd characteristic $p$, $r$ be an odd prime power which is not divided by $p$ and
$\sA_d$ be a dual polar scheme on $[^2 A_{2d}(r)].$
If $F \sA_{p^l-1}$ is a local algebra,
then
$$
F \sA_{p^l-1} \cong \bigotimes^l F \sA_{p-1}.
$$
\end{prop}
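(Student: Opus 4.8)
The plan is to use Proposition \ref{MatrixOfLocal} to recognize $F\sA_d$ as a truncated divided-power algebra and then to decompose it by Lucas' theorem. Since $F\sA_{p^l-1}$ is local we are in case (iii), so $r\equiv-1$ and $q=r^2\equiv1\pmod p$; consequently every Gaussian coefficient collapses to an ordinary binomial, $\qbinom{m}{k}\equiv\binom{m}{k}\pmod p$. First I would rewrite the multiplication in the reversed basis $D_a:=C_{d-a}$ for $0\le a\le d$. Feeding the constraint $d-s=t-u$ of Proposition \ref{MatrixOfLocal} back as $u=s+t-d$ and simplifying the single surviving coefficient $\qbinom{d-u}{s-u}$ gives the clean divided-power rule
$$
D_a D_b \equiv \binom{a+b}{a}\,D_{a+b}\pmod p,
$$
read as $0$ whenever $a+b>d$. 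Thus, up to this normalization, $F\sA_d$ is the one-generator truncated divided-power algebra.

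Next I would specialize to $d=p^l-1$ and set $z_i:=D_{p^i}$ for $0\le i\le l-1$. The first computation is that each $z_i$ is $p$-nilpotent: iterating the rule gives $z_i^{\,p}=\binom{p^{i+1}}{p^i}\binom{p^{i+1}-p^i}{p^i}\cdots\,D_{p^{i+1}}$, and the leading factor $\binom{p^{i+1}}{p^i}$ vanishes modulo $p$ by Lucas' theorem, so $z_i^{\,p}=0$; for $i=l-1$ the target $D_{p^l}$ is already $0$ by the truncation $a+b>d$. Since the $D_a$ commute, the $z_i$ generate a commutative $F$-algebra in which every generator satisfies $z_i^{\,p}=0$.

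The heart of the argument is to show that the $p^l$ monomials $\prod_{i=0}^{l-1} z_i^{\,n_i}$ with $0\le n_i\le p-1$ form a basis. For $n=\sum_{i=0}^{l-1} n_i p^i$ the repeated use of the rule writes $\prod_i z_i^{\,n_i}=c_n D_n$, where $c_n$ is a product of multinomial coefficients $\binom{n_i p^i}{p^i,\dots,p^i}$ (from collecting a single block) and binomials $\binom{m+m'}{m}$ (from merging distinct blocks). As the blocks $n_i p^i$ occupy distinct base-$p$ digits and each $n_i<p$, none of these additions carries, so Lucas' theorem makes every factor a unit and $c_n\in F^\times$. Hence these monomials are nonzero scalar multiples of $D_0,\dots,D_{p^l-1}$, a basis of $F\sA_{p^l-1}$; together with $z_i^{\,p}=0$ and commutativity this yields
$$
F\sA_{p^l-1}\;\cong\;\bigotimes_{i=0}^{l-1} F[z_i]/(z_i^{\,p}).
$$

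Finally I would identify each tensor factor with $F\sA_{p-1}$. In $F\sA_{p-1}$ the rule gives $D_1^{\,n}=n!\,D_n$, and since $n!$ is a unit for $n<p$ while $D_1^{\,p}=p!\,D_p=0$, we get $F\sA_{p-1}\cong F[D_1]/(D_1^{\,p})\cong F[z]/(z^p)$; the same identification carries each $z_i$ to the generator of a copy of $F\sA_{p-1}$. Combining this with the displayed decomposition gives $F\sA_{p^l-1}\cong\bigotimes^l F\sA_{p-1}$, as claimed. I expect the main obstacle to be the bookkeeping in the third step: verifying through the no-carry form of Lucas' theorem that every coefficient $c_n$ is a unit, since this is exactly what upgrades the $z$-monomials from a spanning set to a basis; the one place where a carry does occur—raising a single $z_i$ to the $p$-th power—is precisely where the coefficient must, and does, vanish.
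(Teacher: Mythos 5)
Your proof is correct, but it takes a genuinely different route from the paper's. The paper proves the structure constants themselves factor digit-by-digit: using Proposition \ref{MatrixOfLocal} and Lucas' theorem it verifies $\rho_{ps+\alpha,pt+\beta}^{pu+\gamma,p^l-1}\equiv\rho_{s,t}^{u,p^{l-1}-1}\,\rho_{\alpha,\beta}^{\gamma,p-1}\pmod p$, which identifies the digit-split Riemann base $C_{ps+\alpha}^{p^l-1}$ with $C_s^{p^{l-1}-1}\otimes C_\alpha^{p-1}$ and peels off one factor $F\sA_{p-1}$ at a time, iterating down to $\bigotimes^l F\sA_{p-1}$. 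You instead recast Proposition \ref{MatrixOfLocal} as the truncated divided-power rule $D_aD_b\equiv\binom{a+b}{a}D_{a+b}$ in the reversed basis $D_a=C_{d-a}$ (note $D_0=C_d=I$, so this is an identity-preserving presentation), exhibit the generators $z_i=D_{p^i}$ with $z_i^p=0$, and show via the no-carry form of Lucas that the $p^l$ monomials $\prod_i z_i^{n_i}$ are unit multiples $\bigl(\prod_i n_i!\bigr)D_n$ of the basis elements $D_n$, after which a dimension count gives the whole decomposition in one stroke. Both routes run on the same arithmetic engine (Gaussian coefficients collapse to binomials since $q=r^2\equiv 1\pmod p$, then Lucas), and your isomorphism agrees with the paper's up to the unit scalars $\prod_i n_i!$; but your organization buys more per page: it simultaneously establishes the paper's Lemma \ref{PropertiesOfA} (your relations $z_i^m\equiv m!\,D_{mp^i}$ and the carry-free block-merging rule are exactly its two congruences), Proposition \ref{StructureSmallOne} ($F\sA_{p-1}\cong F[X]/(X^p)$, which the paper handles separately), and in effect Theorem \ref{StructureOfPowerOfP}, whereas the paper's structure-constant congruence isolates the tensor step cleanly but needs those auxiliary statements proved independently. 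Two small points you should make explicit: applying the divided-power rule inside the factor $F\sA_{p-1}$ requires that algebra to be local, which does follow from Corollary \ref{ConditionOfLocal} since locality of $F\sA_{p^l-1}$ already forces $r\equiv -1\pmod p$; and your truncation convention $D_aD_b=0$ for $a+b>d$ is legitimate because in that case $s+t<d$, so no $u\in\{0,\dots,\min(s,t)\}$ satisfies $d-s=t-u$ and all structure constants vanish modulo $p$ by Proposition \ref{MatrixOfLocal}.
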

\begin{proof}
To prove statement,
we show that 
$$
C_{p s+\alpha}^{p^l-1} C_{p t+\beta}^{p^l-1} 
=
\left( \sum_{u=0}^{p^{l-1}-1} \sum_{\gamma}^{p-1} \rho_{s,t}^{u,p^{l-1}} \rho_{\alpha,\beta}^{\gamma,p-1} \right) 
C_{p u +\gamma}^{p^l-1}
$$
for $0 \le s,t \le p^{l-1}$ and $0 \le \alpha, \beta \le p-1$,
where $C_{s}^{d}$ is an $s$-th Riemann base of $F \sA_d$ and $\rho_{s,t}^{u,d}$ is a structure constant of $F \sA_d$. 
This leads to
$$
C_{ps+\alpha}^{p^l-1} = C_{s}^{p^{l-1}-1} \otimes C_{\alpha}^{p-1}.
$$
By repeating the same argument, each Riemann base of $F\sA_{p^l-1}$ can be decomposed
as $l$ tensor products of Riemann basis of $F\sA_{p-1}.$
This implies that our statement.
Thus it suffices to verify that 
$$
\rho_{ps+\alpha, pt+\beta}^{pu+\gamma,p^l-1} \equiv \rho_{s,t}^{u,p^{l-1}-1} \rho_{\alpha,\beta}^{\gamma,p-1}
\pmod{p}
$$
for $0 \le s,t,u \le p^{l-1}-1$, $0 \le \alpha,\beta,\gamma \le p-1.$
Now we assume that $q=r^2 \equiv 1 \pmod{p}.$
This means that any Gaussian coefficient  is congruent to a binomial coefficient.
By Proposition \ref{MatrixOfLocal} and Lucas' theorem \cite{S88},
\begin{align*}
\rho_{ps+\alpha, pt+\beta}^{pu+\gamma,p^l-1} 
&= \qbinom{p^l-1-(pu+\gamma)}{(ps+\alpha)-(pu+\gamma)}\\
&\equiv \binom{p^l-1-(pu+\gamma)}{(ps+\alpha)-(pu+\gamma)}\\
&\equiv \binom{p^{l-1}-1-u}{s-u} \binom{p-1-\gamma}{ \alpha-\gamma}\\
&\equiv \qbinom{p^{l-1}-1-u}{s-u} \qbinom{p-1-\gamma}{ \alpha-\gamma}\\
& = \rho_{s,t }^{u,p^{l-1}-1} \rho_{\alpha,\beta}^{\gamma,p-1}
\end{align*}
This completes the proof.
\end{proof}

Since the lower triangular matrix $G_{p-2}$ of $F \sA_{p-1}$ is nilpotent and its rank is $p-1$,
we determine the structure of $\sA_{p-1}.$
\begin{prop}\label{StructureSmallOne}
Let $F$ be a field of odd characteristic $p$, $r$ be an odd prime power which is not divided by $p$ and $\sA_{d}$ be a dual polar scheme on $[^2A_{2d}(r)]$.
If $F \sA_{p-1}$ is a local algebra, then there is an isomorphism such that
$$
F\sA_{p-1} \cong F[X]/(X^p),$$
given by $G_{p-2} \mapsto X.$
\end{prop}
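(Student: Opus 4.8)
The plan is to exhibit $G_{p-2}$ as a single nilpotent Jordan block of size $p$ and then read off the algebra it generates. First I would record the ambient data: the scheme $\sA_{p-1}$ has class $d=p-1$, so $\dim_F F\sA_{p-1}=d+1=p$, and the isomorphism $\varphi\colon F\sA_{p-1}\to FG$, $C_s\mapsto G_s$, identifies the algebra with the commutative matrix ring $FG=\bigoplus_{i=0}^{p-1}FG_i$ of $p\times p$ lower triangular matrices. Its identity is $G_{p-1}$, since $C_{p-1}=C_d=\sum_{j}\qbinom{j}{d}A_{d-j}=A_0=I$ forces $\rho_{d,t}^u=\delta_{t,u}$, i.e. $G_{p-1}$ is the identity matrix. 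It therefore suffices to prove that $G_{p-2}$ generates $FG$ as a unital algebra and that its minimal polynomial is $X^p$.

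The heart of the argument is to compute the shape of $G_{p-2}$ modulo $p$. Since $F\sA_{p-1}$ is local, the hypotheses give $r\equiv-1\pmod p$, hence $q=r^2\equiv1\pmod p$, so every Gaussian coefficient reduces to the corresponding ordinary binomial coefficient. Applying Proposition \ref{MatrixOfLocal} with $s=p-2$ and $d=p-1$, the entry $(G_{p-2})_{t,u}=\rho_{p-2,t}^{u}$ vanishes unless $d-s=t-u$, i.e. unless $u=t-1$, and in that case
$$
(G_{p-2})_{t,t-1}\equiv \qbinom{d-u}{s-u}=\qbinom{p-t}{p-1-t}=\qbinom{p-t}{1}=1+q+\dots+q^{p-t-1}\equiv p-t\pmod p .
$$
For $t=1,\dots,p-1$ the value $p-t$ runs through $p-1,\dots,1$, all nonzero in $F$. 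Thus $G_{p-2}$ has nonzero entries exactly on the subdiagonal and zeros elsewhere. Reading it as a linear operator on $F^{p}$, it sends $e_0\mapsto e_1\mapsto\dots\mapsto e_{p-1}\mapsto 0$ up to nonzero scalars, so it is nilpotent with $G_{p-2}^{p-1}\neq0$ and $G_{p-2}^{p}=0$, and its rank is $p-1$; in other words $G_{p-2}$ is a single Jordan block, recovering the nilpotency and rank asserted just before the statement.

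Once the Jordan block structure is in hand the conclusion is immediate: for a single nilpotent block of size $p$ the powers $I=G_{p-2}^0,G_{p-2},\dots,G_{p-2}^{p-1}$ are linearly independent. As these are $p$ elements of the $p$-dimensional algebra $FG$, they form a basis, so $FG=F[G_{p-2}]$ and $G_{p-2}$ has minimal polynomial $X^p$. Hence the evaluation map $F[X]\to FG$, $X\mapsto G_{p-2}$, is a surjective algebra homomorphism with kernel exactly $(X^p)$; composing with $\varphi$ yields $F\sA_{p-1}\cong F[X]/(X^p)$, with $G_{p-2}$ (the image of $C_{p-2}$) corresponding to $X$, as claimed.

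I expect the single computational step above — proving that the entire subdiagonal of $G_{p-2}$ is nonzero, equivalently that the rank is exactly $p-1$ and the nilpotency index exactly $p$ — to be the main obstacle, since it is precisely this that forces all $p$ powers to be independent and pins the algebra down to $F[X]/(X^p)$ rather than a proper quotient. The remaining steps are formal, relying only on the dimension count $d+1=p$ and the standard normal form of a single Jordan block.
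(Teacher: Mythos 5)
Your proof is correct and takes essentially the same route the paper intends: the paper offers no written proof beyond the remark that $G_{p-2}$ is nilpotent of rank $p-1$, and your computation via Proposition \ref{MatrixOfLocal} — showing the subdiagonal entries are $\qbinom{p-t}{1}\equiv p-t\not\equiv 0\pmod{p}$, so $G_{p-2}$ is a single Jordan block of size $p$ whose powers $I,G_{p-2},\dots,G_{p-2}^{p-1}$ span the $p$-dimensional algebra $FG\cong F\sA_{p-1}$ — is precisely the detail that justifies that remark. The surrounding bookkeeping (identity element $G_{p-1}$, faithfulness of $\varphi$, kernel of the evaluation map equal to $(X^p)$) is also handled correctly.
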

By Proposition \ref{TensorDecomposition} and \ref{StructureSmallOne},
we know that the modular adjacency algebra $F \sA_{p^l-1}$ is isomorphic to the group algebra of the elementary abelian group of order $p^l$ over $F$.
The group algebra is isomorphic to a quotient ring $F[X_1,X_2,\dots,X_l]/(X_1^p,X_2^p,\dots,X_l^p)$ (e.g., Chapter V Lemma 5.3 in \cite{EORT}).
We put $P_{l}=F[X_1,X_2,\dots,X_l]/(X_1^p,X_2^p,\dots,X_l^p).$
\begin{thm}\label{StructureOfPowerOfP}
Let $F$ be a field of odd characteristic $p$, $r$ be an odd prime power which is not divided by $p$ and
$\sA_d$ be a dual polar scheme on $[^2 A_{2d}(r)]$.
If $F \sA_{p^l-1}$ is a local algebra,
then 
$$F \sA_{p^l-1} \cong P_l.$$
\end{thm}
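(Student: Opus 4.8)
The plan is to chain together the two structural results already proved, Proposition \ref{TensorDecomposition} and Proposition \ref{StructureSmallOne}, and then to identify an $l$-fold tensor power of a truncated polynomial ring with $P_l$ by a standard algebra isomorphism. The hypothesis of the present theorem coincides with that of Proposition \ref{TensorDecomposition}, so no additional assumption is required.

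First I would invoke Proposition \ref{TensorDecomposition}, which under the locality hypothesis gives an $F$-algebra isomorphism
$$
F\sA_{p^l-1} \cong \bigotimes^l F\sA_{p-1},
$$
the $l$-fold tensor product being taken over $F$. I would then apply Proposition \ref{StructureSmallOne} to each tensor factor. Locality of $F\sA_{p^l-1}$ forces $r \equiv -1 \pmod p$ by Corollary \ref{ConditionOfLocal}, and since that condition is independent of the class, $F\sA_{p-1}$ is local as well; hence Proposition \ref{StructureSmallOne} yields $F\sA_{p-1} \cong F[X]/(X^p)$, sending $G_{p-2}$ to $X$. Because the $F$-tensor product of algebra isomorphisms is again an algebra isomorphism, I obtain
$$
F\sA_{p^l-1} \cong \bigotimes^l \bigl(F[X]/(X^p)\bigr).
$$

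Finally I would identify the right-hand side with $P_l$ using the routine fact that the $F$-tensor product of truncated polynomial rings is the truncated polynomial ring in the combined variables,
$$
\bigotimes^l \bigl(F[X]/(X^p)\bigr) \cong F[X_1,\dots,X_l]/(X_1^p,\dots,X_l^p) = P_l,
$$
via $X^{a_1}\otimes\cdots\otimes X^{a_l} \mapsto X_1^{a_1}\cdots X_l^{a_l}$ for $0 \le a_i \le p-1$. Equivalently, recognizing $F[X]/(X^p)$ as the group algebra $F C_p$ through $X = g-1$ (so that $(g-1)^p = g^p-1 = 0$ in characteristic $p$), the tensor power becomes the group algebra of the elementary abelian group of order $p^l$, whose identification with $P_l$ is Chapter V Lemma 5.3 in \cite{EORT}, as already remarked before the statement. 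Composing the three isomorphisms gives $F\sA_{p^l-1} \cong P_l$.

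I do not expect a genuine obstacle here: all the real work, namely the tensor decomposition via Lucas' theorem together with Proposition \ref{MatrixOfLocal} and the one-variable base case, is carried out in the preceding propositions. The only point needing care is that the isomorphisms be tracked as \emph{algebra} (not merely vector-space) isomorphisms through the tensor product, and that the final truncated-polynomial identification respect the multiplicative structure; both are standard once the factorwise isomorphisms are in hand.
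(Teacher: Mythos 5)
Your proposal is correct and takes essentially the same route as the paper: the paper also obtains the theorem by combining Proposition \ref{TensorDecomposition} with Proposition \ref{StructureSmallOne} and identifying the $l$-fold tensor power of $F[X]/(X^p)$ with the group algebra of the elementary abelian group of order $p^l$, hence with $P_l$, citing Chapter V Lemma 5.3 of \cite{EORT}. Your explicit remark that locality passes from $F\sA_{p^l-1}$ to $F\sA_{p-1}$ via Corollary \ref{ConditionOfLocal} (the condition $r \equiv -1 \pmod{p}$ being independent of the class) merely spells out a step the paper leaves tacit.
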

Next we determine the structure of $F \sA_{d}.$
To do this, we need some properties of the Riemann basis of $F \sA_{p^l-1}.$

\begin{lem}\label{PropertiesOfA}
Let $F$ be a field of odd characteristic $p$, $r$ be an odd prime power which is not divided by $p$ and
$\sA_d$ be a dual polar scheme on $[^2 A_{2d}(r)]$.
If $F \sA_{p^l-1}$ is a local algebra,
then the following congruences are hold:
\begin{enumerate}
\item
$$
(C_{p^l-1-p^i})^m \equiv m! C_{p^l-1-mp^i} \pmod{p} \text{ for $m \in \{1,2,\dots,p-1\};$}
$$
\item  
$$
C_{p^l-1-\alpha p^i} C_{p^l-1- \beta p^j} \equiv C_{p^l-1-(\alpha p^i + \beta p^j)}\pmod{p}  
\text{ for $i \ne j$ and $\alpha,\beta \in \{0,1,\dots,p-1\}.$ }
$$
\end{enumerate}
\end{lem}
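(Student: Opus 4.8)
The plan is to distill from Proposition \ref{MatrixOfLocal} a single clean multiplication rule for the Riemann basis, and then read off both congruences as instances of Lucas' theorem. Set $d = p^l - 1$ throughout. Because $F\sA_{p^l-1}$ is local of type $[{}^2 A_{2d}(r)]$, Corollary \ref{ConditionOfLocal} gives $r \equiv -1 \pmod p$, hence $q = r^2 \equiv 1 \pmod p$, so every Gaussian coefficient is congruent modulo $p$ to the corresponding ordinary binomial coefficient. By Proposition \ref{MatrixOfLocal}, in the product $C_s C_t = \sum_u \rho_{s,t}^u C_u$ only the term with $d - s = t - u$ survives modulo $p$; that is, $u = s+t-d$, with surviving coefficient $\binom{d-u}{s-u}_q \equiv \binom{d-u}{s-u}$. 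Writing $s = d-a$ and $t = d-b$ I would convert this into the divided-power formula
$$
C_{d-a}\,C_{d-b} \equiv \binom{a+b}{a}\,C_{d-(a+b)} \pmod p
$$
whenever $a+b \le d$ (and $\equiv 0$ otherwise, since then $u = s+t-d < 0$ is out of range). The index bounds needed later are easy: for (i) one has $m p^i \le (p-1)p^{l-1} \le d$, and for (ii) one has $\alpha p^i + \beta p^j \le d$ because only two base-$p$ digits are nonzero.

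For statement (i) I would induct on $m$, the case $m=1$ being trivial. For the step I multiply $(C_{d-p^i})^{m-1} \equiv (m-1)!\,C_{d-(m-1)p^i}$ by $C_{d-p^i}$ and apply the divided-power formula with $a = (m-1)p^i$ and $b = p^i$. The coefficient is $\binom{m p^i}{p^i}$; since $m \le p-1$ the integer $m p^i$ has the single nonzero base-$p$ digit $m$ at position $i$, so Lucas' theorem \cite{S88} gives
$$
\binom{m p^i}{p^i} \equiv \binom{m}{1} = m \pmod p,
$$
whence the product equals $m!\,C_{d-mp^i}$, completing the induction.

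For statement (ii) I would apply the divided-power formula directly with $a = \alpha p^i$ and $b = \beta p^j$. Since $i \ne j$ and $\alpha,\beta \le p-1$, forming $\alpha p^i + \beta p^j$ produces no carries, so its base-$p$ digits are $\alpha$ at position $i$ and $\beta$ at position $j$; Lucas' theorem then yields
$$
\binom{\alpha p^i + \beta p^j}{\alpha p^i} \equiv \binom{\alpha}{\alpha}\binom{\beta}{0} = 1 \pmod p,
$$
which is exactly the claimed congruence.

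The genuine content, and the step I expect to demand the most care, is the reduction to the single-term divided-power rule: one must check that $u = s+t-d$ is the unique index with $d-s = t-u$, that it lies in the admissible range $0 \le u \le \min(s,t)$ precisely when $a+b \le d$, and that the Gaussian-to-binomial passage justified by $q \equiv 1$ is applied consistently. Once that normal form is in place, both parts are immediate Lucas computations, the only remaining verifications being the no-carry conditions on the base-$p$ digits, which hold because $m \le p-1$ in (i) and $i \ne j$ in (ii).
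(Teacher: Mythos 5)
Your proposal is correct and follows essentially the same route as the paper: reduce each product to the single surviving term $u = s+t-d$ via Proposition \ref{MatrixOfLocal}, pass from Gaussian to ordinary binomial coefficients using $q = r^2 \equiv 1 \pmod p$, and evaluate $\binom{mp^i}{p^i} \equiv m$ (respectively $\binom{\alpha p^i + \beta p^j}{\alpha p^i} \equiv 1$) by Lucas' theorem, with (i) done by induction on $m$. Your packaging of the reduction as the divided-power rule $C_{d-a}C_{d-b} \equiv \binom{a+b}{a} C_{d-(a+b)}$, together with the explicit range and no-carry checks and a written-out part (ii) (which the paper only sketches), is a tidier presentation of the identical argument.
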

\begin{proof}
We prove the first congruence by induction on $m$:
Assuming the congruence holds for  $m-1$ we need to prove that
the congruence holds for $m$.
\begin{align*}
\left( C_{p^l-1-p^i} \right)^m &=(m-1)!C_{p^l-1-(m-1)p^i} C_{p^l-1-p^i}\\
&=(m-1)! \sum_{k=0}^{p^l-1} \rho_{p^l-1-(m-1)p^i,p^l-1-p^i}^k C_k\\
&=(m-1)!  \rho_{p^l-1-(m-1)p^i,p^l-1-p^i}^{p^l-1-mp^i} C_{p^l-1-mp^i}.
\end{align*}
By Proposition \ref{MatrixOfLocal},
terms are eliminated except for $k=p^l-1-mp^i.$
Since we assume that $q \equiv r^2 \equiv 1 \pmod{p},$
any Gaussian coefficient is congruent to a binomial coefficient.
Therefore the following congruence holds by Lucas' Theorem.
\begin{align*}
\rho_{p^l-1-(m-1)p^i,p^l-1-p^i}^{p^l-1-mp^i} 
&\equiv  \qbinom{mp^i}{p^i} \\
&\equiv  \binom{mp^i}{p^i} \\
&\equiv m \pmod{p}.
\end{align*}
Similarly, we prove that the the second congruence by Lucas' Theorem.
\end{proof}
By the above Lemma \ref{PropertiesOfA},
for an isomorphism of Theorem \ref{StructureOfPowerOfP}:
$$
s_l : P_l \longrightarrow \sA_{p^l-1},
$$
we can define the correspondence
$$
s_l(X_i^k)=(C_{p^l-1-p^{i-1}})^k.
$$

At last, we prove our main theorems. 
\begin{proof}[proof of Theorem \ref{MainTheorem1}]
We define a positive integer weight function $wt$ over $P_l$,
say $wt:P_l \longrightarrow \mathbb{Z}^+,$ such that
$$
wt(X_i) = p^{i-1}
\text{ and }
wt(X_{i_0}^{k_0}X_{i_1}^{k_1}\cdots X_{i_s}^{k_s} )=\sum_{j=0}^{s} k_j p^{i_j-1}.
$$
Put $Y_d = X_{i_0}^{k_0}X_{i_1}^{k_1}\cdots X_{i_s}^{k_s} \in P_l$ such that $wt(Y_d)=d~(\le p^l-1)$.
We can prove that
$$
Y_d \in Ker \varphi_d \varphi_{d-1} \cdots \varphi_{p^l-1} s_l \text{ for $d \in \mathbb{Z}^+$ }
$$
using the same argument as the proof of Theorem 8 in \cite{Y04} by Lemma \ref{PropertiesOfA}.
\end{proof}

Next we prove our second main theorem.
\begin{proof}[Proof of Theorem \ref{MainTheorem2}]
Let $F$ be a field of odd characteristic $p$, $q$ be an odd prime power which is not divided by $p$ and
$\sC_d$ be a dual polar scheme on $[C_{d}(q)]$.
If $F \sC_{d}$ is a local algebra,
$d$ must be an odd number
by Corollary \ref{ConditionOfLocal}.
We put $d=2d'+1$.
Since $q \equiv -1 \pmod{p}$,
a modular adjacency algebra $F\sA_{d}$ of a dual polar scheme on $[^2 A_{2d} (q)]$ is a local algebra. 
We prove that there is an isomorphism:
$$
F \sC_{2d'+1} \cong F \sA_{d'} \otimes F \sA_{1},
$$
given by $C_{2s+\alpha}^{2d'+1} \mapsto \hat{C}_{s}^{d'} \otimes \hat{C}_{\alpha}^1\text{ ( $0 \le s \le d', 0\le \alpha \le 1$) },$
where $\hat{C}_{s}^{d'}$ is a $s$-th Riemann base of $F \sA_{d'}.$
To do this,
we  prove 
$$
\rho_{2s+\alpha,2t+\beta}^{2u+\gamma,2d'+1} \equiv 
\hat{\rho}_{s,t}^{u,d'} \hat{\rho}_{\alpha,\beta}^{\gamma,1} \pmod{p}
\text{ for $s,t,u \in \{0,1,\dots, d'\}$, $\alpha,\beta,\gamma \in \{0,1\},$}
$$
where $\rho_{s,t}^{u,d}$ ($\hat{\rho}_{s,t}^{u,d}$)  is a structure constant of $F \sC_d$ ($F \sA_{d}$, respectively).
By Proposition \ref{MatrixOfLocal},
if $\rho_{2s+\alpha,2t+\beta}^{2u+\gamma,2d'+1} \not\equiv 0$, then
$$
\rho_{2s+\alpha,2t+\beta}^{2u+\gamma,2d'+1}  \equiv \qbinom{2d'+1-(2u+\gamma)}{2s+\alpha-(2u+\gamma)}
\equiv \qbinom{2(d'-u)+(1-\gamma)}{2(s-u)+(\alpha-\gamma)}.
$$
By a congruence of Gaussian coefficients and binomial coefficients \cite{S88},
\begin{equation}\label{congrhoOfBig}
\qbinom{2(d'-u)+(1-\gamma)}{2(s-u)+(\alpha-\gamma)} \equiv \binom{d'-u}{s-u}\qbinom{1-\gamma}{\alpha-\gamma} \pmod{p}.
\end{equation}
On the otherhand,
since $q^2 \equiv 1 \pmod{p}$,
\begin{equation}\label{congrhoOfSmall}
\hat{\rho}_{s, t}^{u, d'} \hat{\rho}_{\alpha, \beta}^{\gamma, 1} \equiv 
\qqbinom{d'-u}{s-u} \qqbinom{1-\gamma}{\alpha-\gamma} \equiv
\binom{d'-u}{s-u} \binom{1-\gamma}{\alpha-\gamma} \pmod{p}.
\end{equation}
The values of the second factors of two congruences (\ref{congrhoOfBig}) and (\ref{congrhoOfSmall}) are one.
This completes the proof of Theorem \ref{MainTheorem2}.
\end{proof}
The following proposition determines the structure of modular adjacency algebras of dual polar schemes on remaining types $[D_d(q)]$ and $[^2D_{d+1}(q)]$.
 We can prove this proposition like the proof of  Corollary \ref{EpiOfA}. 
 \begin{prop}
 Let $F$ be a field of odd characteristic $p$, $q$ be an odd prime power which is not divided by $p$ and
$\sC_d$ ($\sD_d$) be a dual polar scheme on $[C_{d}(q)]$ ($[D_d(q)]$, respectively).
If $F \sC_{d}$ is a local algebra,
then there is an epimorphism
$$
\psi_{d}: F \sC_{d} \longrightarrow F\sD_{d-1},
$$
given by $C_s \mapsto \hat{C}_{s-1}$ ($0 \le s \le d$)
where $\hat{C}_{s-1}$ is the $(s-1)$-th  Riemann base of $F \sD_d$ and $\hat{C}_{-1} = 0.$
 \end{prop}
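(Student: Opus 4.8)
The plan is to mirror the proof of Corollary~\ref{EpiOfA} almost verbatim, the only genuinely new point being a bookkeeping check that the parities and the type parameter $e$ cooperate across the two different types $C$ and $D$. First I would invoke Corollary~\ref{ConditionOfLocal}: since $F\sC_d$ is local, we must have $q \equiv -1 \pmod p$ and $d$ odd. Consequently $d-1$ is even, and as $q \equiv -1 \pmod p$ still holds, the $D$-case of Corollary~\ref{ConditionOfLocal} guarantees that $F\sD_{d-1}$ is \emph{also} a local algebra. This is the crucial observation, since it lets me apply Proposition~\ref{MatrixOfLocal} to \emph{both} algebras.

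With both algebras local, their structure constants mod $p$ are controlled by Proposition~\ref{MatrixOfLocal}, whose formula---and this is the key feature---does not involve the type parameter $e$. Explicitly, for $F\sC_d$ (class $d$) one has $\rho_{s,t}^{u,d} \equiv \qbinom{d-u}{s-u}$ precisely when $d-s=t-u$ and $0$ otherwise, while for $F\sD_{d-1}$ (class $d-1$) one has $\hat\rho_{s-1,t-1}^{u-1,d-1} \equiv \qbinom{(d-1)-(u-1)}{(s-1)-(u-1)}$ precisely when $(d-1)-(s-1)=(t-1)-(u-1)$ and $0$ otherwise. A one-line simplification shows that the defining condition collapses to $d-s=t-u$ in both cases and that both Gaussian coefficients equal $\qbinom{d-u}{s-u}$, so
$$
\rho_{s,t}^{u,d} \equiv \hat\rho_{s-1,t-1}^{u-1,d-1} \pmod p \quad\text{for } s,t,u \in \{1,2,\dots,d\}.
$$

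Next I would convert this congruence into multiplicativity of $\psi_d$. Expanding $C_s C_t = \sum_u \rho_{s,t}^{u,d} C_u$ and applying $\psi_d$ (recall $C_u \mapsto \hat C_{u-1}$ with $\hat C_{-1}=0$) annihilates the $u=0$ term, leaving $\sum_{u\ge 1}\rho_{s,t}^{u,d}\hat C_{u-1}$; on the other side $\hat C_{s-1}\hat C_{t-1} = \sum_v \hat\rho_{s-1,t-1}^{v,d-1}\hat C_v$, and the reindexing $v=u-1$ matches this term by term by the displayed congruence. The degenerate cases $s=0$ or $t=0$ are automatic, since then $C_sC_t$ is a scalar multiple of $C_0=J$, which maps to $\hat C_{-1}=0$; one also checks $\psi_d(C_d)=\hat C_{d-1}$ is the identity of $F\sD_{d-1}$, so $\psi_d$ is unital. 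Hence $\psi_d$ is an algebra homomorphism. Surjectivity is then immediate: the images $\hat C_0,\hat C_1,\dots,\hat C_{d-1}$ of $C_1,\dots,C_d$ form the Riemann basis of $F\sD_{d-1}$.

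I do not anticipate a serious obstacle here; the only step requiring real care is the cross-type locality transfer in the first paragraph, where the differing parity conditions for the $C$- and $D$-types (namely $d$ odd versus $d-1$ even) must be seen to align. Once $F\sD_{d-1}$ is known to be local, Proposition~\ref{MatrixOfLocal} erases the distinction between the $e=1$ and $e=0$ types, and the remainder of the argument is formally identical to the proof of Corollary~\ref{EpiOfA}.
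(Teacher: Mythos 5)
Your proposal is correct and matches the paper's intended argument: the paper proves this proposition ``like the proof of Corollary~\ref{EpiOfA},'' i.e., by using Proposition~\ref{MatrixOfLocal} to obtain the shift congruence $\rho_{s,t}^{u,d}\equiv\hat\rho_{s-1,t-1}^{u-1,d-1}\pmod p$ and concluding that the index-shift map is a surjective algebra homomorphism. Your explicit verification that $F\sD_{d-1}$ is local (via $d$ odd, $q\equiv -1\pmod p$, and Corollary~\ref{ConditionOfLocal}) is exactly the bookkeeping the paper leaves implicit, so there is nothing further to add.
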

 
 Since $Ker \psi_{2d'+1}=C_0 \in F\sC_{2d'+1}$, it correspond to the tensor products of the highest weight polynomials in $P/W_{d'}$ and $P/W_1$ by Theorem \ref{MainTheorem1}.
Therefore
$$
F \sD_{2d'} \cong (P/W_{d'} \otimes P/W_{1})/(\overline{Y_{d'}} \otimes  \overline{Y_1}).
$$
\begin{remark}
If $p$ is the even prime number, then $F \sX$ is a local algebra where $\sX$ be  one of the dual polar scheme on $[C_d(q)]$, $[B_d(q)]$, $[D_d(q)]$, $[{}^2 D_{d+1}(q)]$, $[{}^2 A_{2d}(r)]$, $[{}^2 A_{2d-1}(r)]$ and $q=r^2$ for $[{}^2 A_{2d}(r)]$, $[{}^2A_{2d-1} (r)]$ for any odd prime power $q$ by Corollary \ref{NumOfBlocks}.
In particular, by Proposition \ref{IsomorphicCondition} and a same argument as in the proof of Theorem \ref{MainTheorem1}, $F \sX$ is isomorphic to $P/W_d.$
\end{remark}
%%%%%%%%%%%%%%%%%%%%%%%%%%

\end{document}